\documentclass[12pt,twoside]{amsart} 
\usepackage{amssymb,color,tikz}
\usetikzlibrary{matrix} \nonstopmode \textwidth=16.00cm

\usepackage[utf8]{inputenc}
\usepackage[T1]{fontenc}
\usepackage{tikz-cd}
\usepackage{lipsum}
\usepackage{adjustbox}
\usepackage{graphicx,color}
\usepackage{comment}
\usepackage{xcolor}
\usepackage{centernot}
\usepackage{mathtools}
\usepackage{stmaryrd}
\usepackage{blkarray}
\usepackage{amsmath}
\usepackage{nicematrix}
\usepackage{enumitem}
\usepackage{yhmath}
\usepackage[all]{xy}

\numberwithin{equation}{section} 

\newcommand {\PP}{\mathbb{P}}

\newcommand{\K}{\mathbb{K}}

 \DeclareMathOperator{\Proj}{Proj}

\def\cocoa{{\hbox{\rm C\kern-.13em
      o\kern-.07em C\kern-.13em o\kern-.15em A}}}

\DeclareMathOperator{\HF}{HF}

\def\cocoa{{\hbox{\rm C\kern-.13em
      o\kern-.07em C\kern-.13em o\kern-.15em A}}}

\DeclareMathOperator{\ann}{Ann}
\DeclareMathOperator{\Hom}{Hom}
\DeclareMathOperator{\Tor}{Tor}

\DeclareMathOperator{\Ext}{Ext}

\newtheorem{theorem}{Theorem}[section]
\newtheorem{corollary}[theorem]{Corollary}

\newtheorem{proposition}[theorem]{Proposition}

\theoremstyle{definition}
\newtheorem{definition}[theorem]{Definition}
\newtheorem{remark}[theorem]{Remark}
\newtheorem{Notation}[theorem]{Notation}
\newtheorem{example}[theorem]{Example}
\newtheorem{notation}[theorem]{Notation}

\usepackage{hyperref}
\definecolor{MyDarkGreen}{cmyk}{0.7,0,1,0}

\newcommand{\h}[1]{\-\mbox{-#1}}

\title[AG algebras with Macaulay dual generator with fixed Waring rank]{Artinian Gorenstein algebras with Macaulay dual generator with fixed Waring rank}

\author[J. Martins]{Janaine Martins}
    \address{ ICEx - UFMG, Departmento de Matemática, Av. Antônio Carlos, 6627, 30123-970 Belo Horizonte, MG, Brasil} \email{mesquitajanaine@gmail.com, ORCID 0000-0002-4350-3770}

\author[R.\ M.\ Mir\'o-Roig]{Rosa M.\ Mir\'o-Roig (*)}
\address{Facultat de
Matem\`atiques i Inform\`atica, Universitat de Barcelona, Gran Via de les
Corts Catalanes 585, 08007 Barcelona, Spain} \email{miro@ub.edu,  ORCID 0000-0003-1375-6547}

\thanks{(*) Corresponding author}

\thanks{\textit{Mathematics Subject Classification. Primary 13E10; Secondary 13D40, 13H10, 14M05}}

\thanks{\textit{Keywords.} Macaulay dual generator,  Artinian Gorenstein algebras, Hilbert function, weak Lefschetz property, minimal free resolutions, Jordan type}

\thanks{The first author was supported by CAPES/PDSE grand number 88881.220483/2025-01. The second author was partially supported
by the grant PID2024-157142NB-I00.}

\begin{document}

\begin{abstract}
 In this paper, we prove that the Artinian Gorenstein $\K$-algebra $A_{F_s}$ of codimension $n$, socle degree $d$ and  Macaulay dual generator $F_s := \ell_1^d + \dots + \ell_s^d \in \mathbb{K}[X_1, \dots, X_n]$ where $\ell _1, \cdots , \ell_s$ are general linear forms satisfies the strong Lefschetz property (SLP). This result allows us to study whether the Waring rank of $F_s$ is exactly $s$.
 
 Furthermore, we show that $A_{F_s}$ is the doubling of a suitable 0-dimensional scheme $Z_{F_s}$ in $\mathbb{P}^{n-1}$, the so-called tight annihilating scheme of  $A_{F_s}$, and we compute the minimal free resolution of $A_{F_s}$ in terms of the minimal free $R$-resolution of $I(Z_{F_s})$. Finally, we determine the linear general Jordan type of $A_{F_s}$.
\end{abstract}

\maketitle

\section{Introduction}

The study of graded Artinian Gorenstein (AG for short)  $\K$-algebras $A$ holds a significant place in commutative algebra due to their ubiquity in many fields of mathematics, including algebraic geometry, commutative algebra, algebraic topology and combinatorics; and due to  their rich structure and numerous applications \cite{MW09}. Over the last decades, many different problems of such $\K$-algebras have been extensively studied among which we single out a few: 

\vskip 2mm
(1) To determine whether or not they have  the weak Lefschetz property (WLP - see Definition \ref{def_wlp}) or the Strong Lefschetz property (SLP - see Definition \ref{def_wlp}). If they do, it has strong implications for the Hilbert function;
for example, not only must these functions be unimodal but in fact they must satisfy a
strong growth condition. 

(2) To determine whether or not they are the doubling of a 0-dimensional scheme, the so-called tight annihilating scheme of $A$ (see Definition \ref{doubling}). If they do, it has several important geometric implications. 

(3) To compute their minimal free resolution or at least their graded Betti numbers; these homological invariants completely determine the graded Poincaré series of $A$. 

(4) To find the generic Jordan type of $A$, i.e., the Jordan blocks of the nilpotent endomorphism $\times \ell: A\longrightarrow A$ where $\ell $ is a generic form.

\vskip 2mm

Central to the study of standard graded AG $\K$-algebras $A$ is
the concept of a Macaulay dual generator, which, via Macaulay-Matlis duality, connects every finitely
generated Artinian Gorenstein graded $\K$-algebra $A$ to a single homogeneous polynomial $F$, the Macaulay dual generator of $A$ (see Theorem \ref{Gor}). This
duality  transforms algebraic and homological  properties
of $A$ into questions about the polynomial $F$, streamlining their exploration.

In this work, we focus on a specific class of AG algebras, denoted by $A_{F_s}$,  of codimension $n$, socle degree $d$, and with Macaulay dual generator $F_s$  a sum of $d$-th powers of $s$ general linear forms 
$$F_s := \ell_1^d + \dots + \ell_s^d \in \mathbb{K}[X_1, \ldots, X_n].$$
Notice that the Waring rank of $F_s$ $(Wr(F_s))$ is $\le s$ (see Definition \ref{def_wr}).

Our goal will be to address the above 4 problems for such $\K$-algebras $A_{F_s}$. We will first determine their $h$-vector and using induction on the socle degree we will  prove that they do satisfy WLP. This result will allow us to analyze whether the inequality $Wr(F_s)\le s$ turns out to be an equality.

  Since $F_s$ is a sum of $d$-th powers of $s$ general linear forms, the study of $A_{F_s}$ is intrinsically related to the geometry of a 0-dimensional scheme $Z_{F_s} = \{P_1, \dots, P_s\}$ of degree $s$ in the projective space $\mathbb{P}^{n-1}$ where each point $P_i$ corresponds to the dual of the linear form $\ell_i$. Indeed, $A_{F_s}$ is a doubling of $Z_{F_s}$ (see Definition \ref{doubling}). Therefore, the graded Betti numbers of $A_{F_s}$ are determined by the graded Betti numbers of  $I(Z_{F_s})$ and we are led to consider the following longstanding question: What is the structure of the minimal free resolution (MFR) of $I(Z_{F_s})$?
The structure of the MFR for a general set of points in $\PP^2$ has been known since the work of Gaeta \cite{G1}, and was later extended to $\mathbb{P}^3$ by Ballico and Geramita \cite{BG}. For the general case in $\mathbb{P}^n$, the minimal resolution conjecture (MRC for short) was formally proposed by Lorenzini \cite{L}. This conjecture states that the resolution is numerically determined solely by the Hilbert function of the points. The MRC posits that the Betti table of the coordinate ring $R/I(X)$ of a general set of points $X\subset \PP^n$ is as sparse as possible, concentrated in at most two consecutive rows and satisfying the complementarity condition
$$\beta_{i, j} \cdot \beta_{i+1, j+1} = 0$$
for all $i, j$, being $\beta_{i,j} := \text{Tor}_i^R[R/I(X), \mathbb{K}]_{j}$. While this conjecture was proven for either small dimensions ($n\le 4$) or for a large number of points  \cite{HS}, work by Eisenbud, Popescu, Schreyer and Walter \cite{EPSW} has shown that it fails in $\mathbb{P}^n$, with $n \geq 6$ and $n \neq 9$.

Despite the known counterexamples to the MRC in higher dimensions, the conjecture remains a fundamental benchmark for the expected behavior of the syzygies for general sets of points. In our work, we utilize the doubling construction combined with the mapping cone process to derive the minimal free resolution of the AG algebra $A_{F_s}$ from the known (or expected) resolution of the 0-dimensional scheme $X$. Finally, we provide a deeper study of $A_{F_s}$ describing the Jordan decomposition
of the multiplication map $\times \ell :A_{F_s}\longrightarrow A_{F_s}$ by a linear form $\ell$.
The Jordan type $P_{\ell,A}$ of a graded Artinian algebra
$A$ and a linear form $\ell \in A_1$ is the partition of $\dim_{\K} A$ determining the
Jordan block decomposition for the multiplication map $\times \ell :A\longrightarrow A$. From the generic Jordan type $P_{\ell,A_{F_s}}$ of $A_{F_s}$ we deduce that $A_{F_s}$ has not only the WLP but also the SLP.

\vskip 4mm

We now present a detailed overview of the main contents and the structure of this paper. In Section \ref{section2}, we recall the definitions of the Hilbert function and the Lefschetz properties for Artinian graded  $\K$-algebras, review Macaulay-Matlis duality as a very powerful tool for constructing AG $\K$-algebras, and collect the basic properties about the Jordan type of a graded Artinian $\K$-algebra. Section \ref{section3} is devoted to determining the $h$-vector of $A_{F_s}$ and to analyzing the Waring rank $Wr(F_s)$ of $F_s$, noting that it is at most $s$, and  proving that it equals $s$ for  $s \leq \binom{\lfloor d/2 \rfloor +n-1}{n-1}$. In the same Section, we also show that $A_{F_s}$ has the WLP. In Section \ref{section4}, we revisit the doubling and mapping cone constructions in order to describe the shape of a minimal free resolution  of $A_{F_s}$. Finally, in Section \ref{section5}, we determine the generic linear Jordan type of $A_{F_s}$, and as application we conclude that $A_{F_s}$  has the SLP.

\vskip 4mm
\noindent {\bf Acknowledgement.}   The
authors thank the Referees for accurate reading and useful comments.
The first author thanks the Universitat de Barcelona for its hospitality.

\section{Notation and background}\label{section2}

Throughout this paper $\mathbb{K}$ will denote an algebraically closed field of characteristic zero,  $R$ will denote the polynomial ring $\mathbb{K}[x_1,\dots,x_n]$ while $S$ denotes the polynomial ring $\mathbb{K}[X_1, \ldots, X_n]$.

Given any standard graded \mbox{$\K$-algebra} \({A=R/I}\), where $I$ is a homogeneous ideal of $R$, we denote by $\xymatrix@1{\HF_A:\mathbb{Z} \ar[r] & \mathbb{N}}$, with \({\text{HF}_A(j)=\dim _{\mathbb{K}}[A]_j}\),
its \emph{Hilbert function}.

If $A$ is Artinian, there exists an integer $c$, called \text{socle degree}, such that $A_c \ne 0$ and  $A_i = 0$ for all $i > c$. In this case, the finite sequence
\begin{eqnarray*}
(h_0, h_1, \dots, h_c)
\end{eqnarray*}
where $h_i:=\HF_A(i)>0$, is called the \textit{h-vector} of $A$. The \textit{Sperner number} $S_A$ of a standard graded Artinian $\K$-algebra $A$, with $h$-vector $h_A=(h_0,\dots,h_c)$, is the maximum value of $h_A$, i.e., $S_A=\max_{i\in\{0,\dots,c\}}\{h_i\}$.

\subsection{Macaulay-Matlis duality} In this subsection, we quickly recall the construction of the standard graded Artinian Gorenstein (AG, for short) $\K$-algebra
associated to a given form.

To set up the framework for this duality, let $V$ be an $n$-dimensional $\mathbb{K}$-vector space, and let $\{x_1, \dots, x_n\}$ and $\{X_1, \dots, X_n\}$ be dual bases of $V^{\ast}$ and $V$, respectively. This allows us to naturally identify the previously defined polynomial rings $R$ and $S$ with the symmetric algebras
\begin{eqnarray*}
R=\bigoplus_{i \geq 0} \operatorname{Sym}^i V^{\ast}
\quad \text{and} \quad
S=\bigoplus_{i \geq 0} \operatorname{Sym}^i V .
\end{eqnarray*}

Under this identification, $R$ acts on $S$ via differential operators defined by 
\begin{eqnarray*}
x_i = \frac{\partial}{\partial X_i}.
\end{eqnarray*}

This action, called the \emph{apolar action} of $R$ on $S$, identifies elements in $R$ with partial differential operators on $S$. More precisely,  the apolar action of $R$ on $S$ induces bilinear maps defined by differentiation 
\begin{eqnarray*}
\operatorname{Sym}^i V ^{\ast}\otimes \operatorname{Sym}^j V
&\longrightarrow& 
\operatorname{Sym}^{j-i} V\\
\alpha \otimes F &\longmapsto& \alpha(F),
\end{eqnarray*}
which is zero whenever $i>j$. Given a graded $R$-submodule $M$ of $S$, we define its annihilator in $R$ by
\begin{eqnarray*}
\operatorname{Ann}_R(M)
=
\left\{
\alpha = p(x_1,\dots,x_n) \in R
\;\middle|\;
\alpha(F)
=
p\!\left(\frac{\partial}{\partial X_1}, \dots, \frac{\partial}{\partial X_n}\right) \circ  F = 0 \text{ for all } F \in M
\right\}
\end{eqnarray*}
which is a homogeneous  ideal of $R$. Conversely, if $I \subset R$ is a homogeneous ideal, we define its \emph{inverse system} by
\begin{eqnarray*}
I^{-1}
=
\{F \in S \mid \alpha(F)=0 \text{ for all } \alpha \in I\}.
\end{eqnarray*}
Then $I^{-1}$ is a graded $R$-submodule of $S$.

This establishes a one-to-one correspondence (Macaulay--Matlis duality):
\begin{eqnarray*}
\{\text { homogeneous ideals of } R\} & \leftrightarrow &  \{\text{graded } \text { $R$-submodules of } S\} \\
\operatorname{Ann}_R(M) & \leftarrow & M \\
I & \rightarrow & I^{-1}
\end{eqnarray*}

Under this correspondence, $I^{-1}$ is a finitely generated $S$-module 
if and only if $R/I$ is an Artinian algebra.
\begin{definition} Let $A$ be a standard Artinian  graded $\K$-algebra.
    We say that $A$ is \textit{Gorenstein} with socle degree $c$ if $\dim_{\mathbb{K}} [A]_c = 1$
and, for every $i=0,\ldots,c$, the multiplication map induces a perfect pairing
$$
[A]_i \times [A]_{c-i} \longrightarrow [A]_c \simeq \mathbb{K}.
$$
Equivalently, for each $i$ there is a natural isomorphism
$$
[A]_{c-i} \simeq \operatorname{Hom}_{\mathbb{K}}([A]_i,\mathbb{K}).
$$
\end{definition}
\begin{theorem}\label{Gor}
Let 
\begin{eqnarray*}
A=\bigoplus_{i=0}^c [A]_i = R/I
\end{eqnarray*}
be a standard graded Artinian $\mathbb{K}$-algebra. 
Then $A$ is Gorenstein if and only if there exists a homogeneous polynomial 
$F \in [S]_c$ such that
\begin{eqnarray*}
A \simeq R/\operatorname{Ann}_R(F).
\end{eqnarray*}
\end{theorem}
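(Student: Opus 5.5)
The statement to prove is Theorem \ref{Gor}: a standard graded Artinian $\K$-algebra $A=R/I$ is Gorenstein with socle degree $c$ exactly when $A\simeq R/\operatorname{Ann}_R(F)$ for some $F\in[S]_c$. The tool throughout is the perfect pairing given by the apolar action, $[R]_i\times[S]_i\to\K$, $(\alpha,G)\mapsto \alpha(G)$, which is nondegenerate in characteristic zero (monomials are dual bases up to nonzero factorials). I prove the two implications separately.

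\emph{($\Leftarrow$)} Let $F\in[S]_c$ and $I=\operatorname{Ann}_R(F)$.
\begin{enumerate}
\item For $i>c$ every element of $[R]_i$ kills $F$, so $A_i=0$. Nondegeneracy shows some degree-$c$ operator does not kill $F$, so $[A]_c\neq 0$.
\item Define $\phi:[A]_c\to\K$ by $\alpha\mapsto\alpha(F)$. It is well defined and injective by the definition of $I$, so $[A]_c\simeq\K$.
\item For $\alpha\in[A]_i$, $\beta\in[A]_{c-i}$, the product pairs to $\phi(\alpha\beta)=\beta(\alpha(F))$. If $\alpha\neq 0$ in $A$, then $\alpha(F)$ is a nonzero element of $[S]_{c-i}$. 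Nondegeneracy gives $\beta\in[R]_{c-i}$ with $\beta(\alpha(F))\neq 0$.
\item By symmetry the pairing $[A]_i\times[A]_{c-i}\to\K$ has trivial kernel on both sides, so it is perfect and $A$ is Gorenstein.
\end{enumerate}

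\emph{($\Rightarrow$)} Let $A$ be Gorenstein with socle degree $c$ and fix an isomorphism $\phi:[A]_c\to\K$.
\begin{enumerate}
\item The composite $[R]_c\to[A]_c\xrightarrow{\phi}\K$ is a linear functional on $[R]_c$. By nondegeneracy of $[R]_c\times[S]_c\to\K$, it is represented by a unique $F\in[S]_c$, meaning $\phi(\bar\alpha)=\alpha(F)$ for all $\alpha\in[R]_c$.
\item I claim $I=\operatorname{Ann}_R(F)$, checked degree by degree.
\item Degrees $i>c$: both ideals are everything, since $A_i=0$ and $F$ has degree $c$.
\item Degrees $i\le c$: take $\alpha\in[R]_i$. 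Then $\alpha(F)=0$ iff $\beta(\alpha(F))=0$ for all $\beta\in[R]_{c-i}$, by nondegeneracy in degree $c-i$. This holds iff $\phi(\overline{\alpha\beta})=0$ for all such $\beta$, i.e.\ $\bar\alpha$ pairs to zero with all of $[A]_{c-i}$. By perfectness of the Gorenstein pairing, that happens iff $\bar\alpha=0$, i.e.\ $\alpha\in I$.
\item Hence $A=R/I=R/\operatorname{Ann}_R(F)$.
\end{enumerate}

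The only delicate point is the nondegeneracy of the apolar pairing, which is where characteristic zero is used. One computes $x^a(X^b)=\delta_{a,b}\,a!$ for monomials of equal degree, so the Gram matrix in the monomial bases is diagonal with nonzero entries. Everything else is bookkeeping with graded pieces.
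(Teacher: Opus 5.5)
Your proof is correct. The paper does not prove this statement itself; it only cites \cite[Theorem 2.1]{MW09}, so there is no internal argument to compare against.

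What you give is the standard self-contained Macaulay inverse-system proof. You phrase it directly through the apolar pairing $[R]_i\times[S]_i\to\K$ rather than through the correspondence $I\leftrightarrow I^{-1}$ that the paper sets up just before the theorem. In that language, your ($\Rightarrow$) direction says two things: $\dim_\K[A]_c=1$ forces $[I^{-1}]_c=\langle F\rangle$, and the perfect pairing in every degree then forces $I^{-1}=R\circ F$, i.e.\ the inverse system is cyclic.

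Compared with the bare citation, your version has two advantages:
\begin{enumerate}
\item It works directly with the perfect-pairing definition of Gorenstein that the paper adopts, so it does not depend on which definition the cited source uses.
\item It makes explicit that characteristic zero enters only through nondegeneracy of the differentiation pairing. In positive characteristic you would replace differentiation by contraction, and the same argument goes through verbatim.
\end{enumerate}

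The only thing left implicit is $F\neq 0$ in ($\Leftarrow$). That is automatic, since $[A]_0=\K$ rules out $\operatorname{Ann}_R(F)=R$.
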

\begin{proof}
See (\cite{MW09}, Theorem 2.1).
\end{proof}

Notice that if $F \in [S]_c$ and $A := R/\operatorname{Ann}_R(F)$, we consider the apolar map defined by:
\begin{eqnarray*}
\varphi_F : [R]_t & \longrightarrow & [S]_{c-t} \\
\alpha & \longmapsto & \alpha(F)
\end{eqnarray*}
By the First Isomorphism Theorem, we have the isomorphism of $\mathbb{K}$-vector spaces:
$$ [R]_t / \ker(\varphi_F) \simeq \operatorname{Im}(\varphi_F) $$
Since $\ker(\varphi_F) = [\operatorname{Ann}_R(F)]_t = [I]_t$, this induces an isomorphism:
$$ [A]_t \simeq \left\langle \frac{\partial ^{t}F}{\partial X_1^{t_1}\cdots \partial X_n^{t_n}} \;\middle|\; t_1+\cdots +t_n= t \right\rangle $$
(see, for instance, \cite[Section 1.1] {IK} for further details).

Consequently, the Hilbert function of $A$ can be computed as
\begin{equation}
    \label{computingHilbertFunctionGorenstein}
\begin{array}{lcl}
    \HF_A(t) &= &\dim_{\K}[A]_t \\
    &= &\dim_{\K}[A]_{c-t} \\
    & =&\dim_{\K}[I^{-1}]_{c-t}  \\
    &= &\dim_{\K} \left\langle \frac{\partial ^{t}F}{\partial X_1^{t_1}\cdots \partial X_n^{t_n}} \mid t_1+\cdots +t_n= t
    \right\rangle .
\end{array}
\end{equation}

\subsection{Lefschetz properties}

In this subsection, we define the Lefschetz properties, which are inspired by the 
Hard Lefschetz Theorem in topology \cite{S80}.

\begin{definition}\label{def_wlp}
Let $A=R/I$ be a standard graded Artinian $\K$-algebra with $h$-vector $(h_0,\dots,h_c)$. We say that $A$ has the \emph{weak Lefschetz property} (WLP, for short)
if there is a linear form $\ell \in [A]_1$ such that, for all
integers $i\ge0$, the multiplication map
\[
 \times \ell: [A]_{i} \longrightarrow [A]_{i+1} 
\]
has maximal rank, i.e., it is injective or surjective. 
In this case, the linear form $\ell$ is called a \emph{weak Lefschetz element} of $A$. 

$A$ has the \emph{strong Lefschetz property} (SLP, for short) if there is a linear form $\ell \in [A]_1$ such that, for all
integers $i\ge0$ and $k\ge 1$, the multiplication map
\[
 \times \ell^k: [A]_{i} \longrightarrow [A]_{i+k} 
\]
has maximal rank. Such an element $\ell$ is called a \emph{strong Lefschetz element} of $A$.

\end{definition}

\begin{remark} \label{rmk}  \noindent (1)  We consider the short exact sequence
    \begin{equation*}
    [R/I]_{i}\xrightarrow{\times\ell}[R/I]_{i+1}\rightarrow[R/(I,\ell)]_{i+1}\rightarrow 0,    
    \end{equation*}    
    where $\ell\in [A]_1.$
    
    Then, the multiplication map $\times\ell$ fails to have maximal rank if and only if
    \[
    \dim_\K[R/(I,\ell)]_{i+1} > \max\{0,\dim_\K[R/I]_{i+1}-\dim_\K[R/I]_{i}\}.
    \]

More precisely:
\begin{itemize}
    \item [(i)] If $\dim_\K[R/I]_{i}\leq \dim_\K[R/I]_{i+1}$ and $\dim_\K[R/(I,\ell)]_{i+1} > \dim_\K[R/I]_{i+1}-\dim_\K[R/I]_{i}$, we say that $\ell$ is not a weak Lefschetz element because of failure of injectivity;
    \item [(ii)] If $\dim_\K[R/I]_{i}\ge \dim_\K[R/I]_{i+1}$ and $\dim_\K[R/(I,\ell)]_{i+1} > 0$, we say that $\ell$ is not a weak Lefschetz element because of failure of surjectivity.
\end{itemize}
\end{remark}

\noindent \label{AG}(2) In the case of an AG $\K$-algebra \(A\) of socle degree \(c\), the surjectivity of the map 
\[
\times \ell : [A]_i \to [A]_{i+1}
\]
implies the injectivity of 
\[
\times \ell : [A]_{c-i-1} \to [A]_{c-i},
\]
because of duality. Hence, for AG $\K$-algebras it will be enough to check if
the map $\times \ell: [A]_i \longrightarrow [A]_{i+1}$ is surjectivity in the middle, that is, $i=\lfloor c / 2\rfloor$  \cite[Proposition 2.1]{MMN2}.

    \begin{example} (1) Consider $I = (x^2, y^2, z^2, xy) \subset R=\mathbb{K}[x, y, z]$. The ideal $I$ has the  WLP. Since $R/I$ has $h$-vector (1, 3, 2), it suffices to check that the map  $\times (x+y+z): [R/I]_1 \rightarrow [R/I]_2$ is surjective or, equivalently,  $[R/(I,\ell)]_2=0.$
In fact,
\begin{eqnarray*}
[R/(I,\ell)]_2
&\cong&
\left[\mathbb{K}[x,y,z]/(x^2,y^2,z^2,xy,x+y+z)\right]_2 \\
&\cong&
\left[\mathbb{K}[x,y]/(x^2,y^2,(x+y)^2,xy)\right]_2\\
&\cong&
[\mathbb{K}[x,y]/(x^2,y^2,xy)]_2 = 0.
\end{eqnarray*}
(2) Let $I = (x^3, y^3, z^3, xyz) \subset R=\mathbb{K}[x, y, z]$. The ideal $I$ fails the  WLP. The $h$-vector of $R/I$ is  (1, 3, 6, 6, 3). 
For any linear form $\ell = ax + by + cz$, there exists a nontrivial element in the kernel of $\times \ell:  [R/I]_2 \longrightarrow [R/I]_3$, namely
\[
f = a^2 x^2 + b^2 y^2 + c^2 z^2 - ab\, xy - ac\, xz - bc\, yz.
\]
Therefore, $A$ fails the WLP.
\end{example}
    The second example is known as the Togliatti example (see \cite{BK} and  \cite{MMRMO}).


\subsection{Jordan type of a graded Artinian algebra}


\begin{definition} \label{jordanBasisDefinition} 
Let $A$ be a standard  graded Artinian $\K$-algebra. The \textit{Jordan type} of a linear form $\ell\in [A]_1$ in $A$ is the partition  of  $\dim_\K A$, $P_{\ell,A}=(p_1,\dots,p_s)$, $p_1 \ge \cdots \ge p_s$, where $p_1, \dots,p_s$ represent the block sizes in the Jordan  form of the map $\times \ell: A \longrightarrow A$. A $\K$-basis $\mathcal{B}$ of $A$ is called  a \textit{pre\h{Jordan}} basis if it is given by  $\mathcal{B}=\{\ell^iz_k:1\le k\le s, \ 0\le i \le p_k-1\}$ for suitable elements $z_k\in A$. A \textit{string} is a sequence
$$
S_k:=(z_k,\ell z_k,\dots, \ell^{p_k-1}z_k)
$$
of $\mathcal{B}$. A \textit{bead} is an element $\ell^iz_k$ in a string $S_k$. A \textit{Jordan basis} for $\ell$ is a pre\h{Jordan} basis $\mathcal{B}$ satisfying $\ell^{p_k}z_k=0$ for each $k\in\{1,\dots,s\}$. By  \cite[Lemma 2.2 (iii)]{IMM22}, one can construct a Jordan basis from any pre\h{Jordan} basis . 
If $\nu_k$ is the degree of $z_k$, then the sequence
\[
\mathcal{S}_{\ell,A}=((p_1,\nu_1),\dots,(p_s,\nu_s))
\]
is an invariant of $(A,\ell)$  called \textit{Jordan degree\h{type}} of $A$ with respect to $\ell$ \cite[Lemma 2.2(iv)]{IMM22}.
\end{definition}

 We illustrate the definition with an example.

 \begin{example}
     Let $A = \mathbb{K}[x,y,z]/(x^2, y^2, z^2, xyz)$. A basis for $A$ is, for instance,
$$ \{1, x, y, z, xy, xz, yz\}. $$
So, the $h-$vector is (1, 3, 3) and $\dim_\mathbb{K} A = 7$. Consider the multiplication map
$$ \times \ell : A \to A, $$
where $\ell = x + y + z$. We have the following Jordan basis strings:

\begin{center}
    $1 \longmapsto x+y+z \longmapsto 2(xy+xz+yz) \longmapsto 0$ \\
    \smallskip
    $x\longmapsto xy + xz \longmapsto 0$ \\
    \smallskip
    $y \longmapsto xy + yz \longmapsto 0$
\end{center}

  Thus, a Jordan basis $\mathcal{B}$ is composed by the following strings
    \[
    (1,\ell,\ell^2), (x,x\ell), \text{ and } (y,y\ell).
    \]
    So the Jordan form  of  $\times\ell:A\to A$ with respect to $\mathcal{B}$ is
    \[
    \newcommand{\?}[1]{\multicolumn{1}{c|}{#1}}
    \left(\begin{array}{ccccccc}
    0 & 0 & \?0 & 0 &   0 & 0 & 0 \\
    1 & 0 & \?0 & 0 &   0 & 0 & 0 \\
    0 & 1 & \?0 & 0 &   0 & 0 & 0 \\
    \cline{1-5}
    0 & 0 & \?0 & 0 & \?0 & 0 & 0 \\
    0 & 0 & \?0 & 1 & \?0 & 0 & 0 \\
    \cline{4-7}
    0 & 0 & 0 & 0 & \?0 & 0 & 0 \\
    0 & 0 & 0 & 0 & \?0 & 1 & 0
    \end{array}\right).
    \]    
Therefore, the Jordan type of $\ell$ on $A$ is 
$$
P_{\ell,A} = (3,2,2),
$$
since the strings have lengths $3$, $2$, and $2$, respectively.

Moreover, the strings start at degrees $0,1,1$, so the Jordan degree-type is
$$
\mathcal{S}_{\ell,A} = ((3,0),(2,1),(2,1)).
$$
 \end{example}

 \begin{notation}
    Whenever a length is repeated in a Jordan type, we will use exponentiation in order to abbreviate notation. For instance, if $A$ is the $\K$- algebra of the above  example, we will write $P_{\ell,A}=(3,2^2)$ and 
$\mathcal{S}_{\ell,A}=(3_0,2_1^2)$. The lower index indicates the degree where the string starts.
\end{notation}

Given a graded Artinian $\K$-algebra $A$, we say that a linear form $\ell\in [A]_1$ is \textit{general} if it belongs to a Zariski open dense subset of $[A]_1$.
The \textit{generic linear Jordan type} of $A$, denoted $P_{A}$, is the Jordan type $P_{\ell,A}$ for a general linear form $\ell $ (cf.\ \cite[Lemma 2.54, Definition 2.55]{IMM22}).

It is important to notice that computing all possible linear Jordan types for any Artinian $\K$-algebra is not feasible. We will devote the last Section of this paper to compute the generic linear Jordan type of $A_{F_s}$.

Let $\ell_i = a_{i,1}X_1 + \dots + a_{i,n}X_n \in [S]_1$ for $i = 1, \dots, s$. We say that the sequence of linear forms $\ell_1, \dots, \ell_s$ is \emph{general} (or in \emph{general position}) if the vector of coefficients $(a_{1,1}, \dots, a_{1,n}, \dots, a_{s,1}, \dots, a_{s,n}) \in \mathbb{K}^{ns}$ belongs to a suitable dense Zariski open subset of $\mathbb{K}^{ns}$.


\section{The Hilbert function and the weak Lefschetz property}\label{section3}

We fix  integers $n\ge 2$, $d \ge 1$ and 
$1 \le s \le \binom{d+n-1}{d}$. Let $F_{s} \in [S]_d$ be a homogeneous polynomial of degree $d$ given by the sum of the $d$-th powers of $s$ general linear forms, namely,
\begin{eqnarray*}
F_{s} = \ell_1^d + \cdots + \ell_s^d,
\end{eqnarray*}
where $\ell_1, \dots, \ell_s$ are general linear forms in $[S]_1$.

We are interested in studying the $\K$-algebra
\begin{eqnarray*}
A_{F_{s}} := R / \operatorname{Ann}(F_{s}),
\end{eqnarray*}
and its algebraic properties. Let us start computing its $h$-vector.

\begin{proposition}\label{h_vector}We fix integers $d$, $s\ge 1$ and $n\ge 2$ such that $s\le {n+d-1\choose n-1}$. Let $A_{F_{s}} = R/\ann(F_{s}),$ be the AG $\K$-algebra  of codimension $n$, socle degree $d$ and Macaulay dual generator 
\begin{eqnarray*}
F_{s}=\ell_1^d+\cdots+\ell_s^d
\end{eqnarray*}
with $\ell_1,\dots,\ell_s$ general linear forms in $[S]_1$.  
Then the Hilbert function of $A_{F_{s}}$ is given by
$$
h_k=\dim_{\mathbb K}[A_{F_{s}}]_k
=\begin{cases}\min\left\{\binom{n+k-1}{k},\, s\right\} \text{ for }  0\leq k\leq\lfloor\frac{d}{2}\rfloor \\
\text{symmetry}  \text{ for } \lfloor\frac{d}{2}\rfloor < k \leq d  .
\end{cases}
$$
\end{proposition}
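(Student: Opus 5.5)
By Gorenstein symmetry $h_k=h_{d-k}$, so it suffices to compute $h_k$ for $0\le k\le\lfloor d/2\rfloor$. We use formula \eqref{computingHilbertFunctionGorenstein}: $h_k$ is the dimension of the span of the $k$-th partial derivatives of $F_s$.

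The first step is to compute these derivatives explicitly. For $\alpha\in[R]_k$ and $\ell_i=\sum_j a_{i,j}X_j$ one has
\[
\alpha(\ell_i^d)=\frac{d!}{(d-k)!}\,\alpha(a_{i,1},\dots,a_{i,n})\,\ell_i^{d-k}.
\]
Hence the space of $k$-th partials is contained in $\langle \ell_1^{d-k},\dots,\ell_s^{d-k}\rangle$, which gives $h_k\le s$. Trivially also $h_k\le\dim[R]_k=\binom{n+k-1}{k}$.

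For the lower bound we factor the apolar map $\varphi_F:[R]_k\to[S]_{d-k}$ as $\psi\circ \mathrm{ev}$. Here
\[
\mathrm{ev}:[R]_k\to\K^s,\qquad \alpha\mapsto\bigl(\alpha(P_1),\dots,\alpha(P_s)\bigr),
\]
where $P_i=(a_{i,1},\dots,a_{i,n})$, and
\[
\psi:\K^s\to[S]_{d-k},\qquad (c_i)\mapsto \tfrac{d!}{(d-k)!}\sum_i c_i\ell_i^{d-k}.
\]
Then $h_k=\operatorname{rank}(\psi\circ\mathrm{ev})$.

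Now two general position facts enter.

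First, $\mathrm{ev}$ has maximal rank $\min\{\binom{n+k-1}{k},s\}$ for general points, since $s$ general points impose independent conditions on forms of degree $k$ (or, when $s$ exceeds that dimension, the ideal in degree $k$ is zero). One sees this by induction on $s$: choose each new point off the hypersurface given by a form vanishing at the previous points.

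Second, $\psi$ is injective as long as $s\le\binom{n+d-k-1}{d-k}$, because powers $\ell_i^{d-k}$ of general linear forms are linearly independent. This is the same statement by duality: the $\ell_i^{d-k}$ are dependent exactly when the Veronese images of $P_i$ are dependent.

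The main obstacle is that $\psi$ may fail to be injective when $s$ is large, so we must control $\operatorname{rank}(\psi\circ\mathrm{ev})$ directly. The plan is to note that $\operatorname{Im}(\psi\circ\mathrm{ev})=\psi(\operatorname{Im}\mathrm{ev})$. There are two cases.

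If $s\le\binom{n+k-1}{k}$, then $\mathrm{ev}$ is surjective. Since $k\le d-k$, we have $s\le\binom{n+d-k-1}{d-k}$, so $\psi$ is injective and the rank is $s$.

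If $s>\binom{n+k-1}{k}$, we must show $\varphi_F$ is injective on $[R]_k$. Take $\alpha\ne0$ of degree $k$. Pick a subset of $N=\binom{n+k-1}{k}$ of the points on which $\mathrm{ev}$ is already an isomorphism; in particular $\alpha$ does not vanish at all of them. The corresponding powers $\ell_i^{d-k}$, $N$ of them with $N\le\binom{n+d-k-1}{d-k}$, together with the remaining ones, might be dependent. To avoid this, we use semicontinuity instead. The rank of $\varphi_{F_s}$ in degree $k$ is lower semicontinuous in the coefficients, and is nondecreasing in $s$: specializing $\ell_{N+1},\dots,\ell_s$ to $0$ recovers $F_N$. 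So it is at least the rank for $F_N$, which is $N$ by the first case. Hence $h_k=N$.

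Combining the two cases gives $h_k=\min\{\binom{n+k-1}{k},s\}$ for $k\le\lfloor d/2\rfloor$, and symmetry gives the rest. The semicontinuity step is the delicate point. Specializing linear forms to zero keeps us within a Zariski closure of the family, so the generic rank dominates the special one.
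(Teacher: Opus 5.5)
Your argument is correct. Its skeleton matches the paper's:
\begin{itemize}
\item the upper bound $h_k\le s$, from $[A_{F_s}]_k\subseteq\langle\ell_1^{d-k},\dots,\ell_s^{d-k}\rangle$;
\item the trivial bound $h_k\le\binom{n+k-1}{k}$;
\item a split according to whether $s\le\binom{n+k-1}{k}$;
\item Gorenstein symmetry for the remaining degrees.
\end{itemize}
The difference lies in how you justify the two cases, and in both your version is more rigorous than the paper's.

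In the first case, the paper asserts $[A_{F_s}]_k=\operatorname{Span}\{\ell_1^{d-k},\dots,\ell_s^{d-k}\}$ and reads off $h_k=s$ from independence of the powers. That equality is only a containment in general. For example, with $n=2$, $d=4$, $k=1$, $s=3$, the right-hand side has dimension $3$ while $h_1\le 2$. Your factorization $\varphi_{F_s}=\psi\circ\mathrm{ev}$ shows why the equality does hold when $s\le\binom{n+k-1}{k}$: $\mathrm{ev}$ is then surjective. Your observation that $k\le d-k$ forces $s\le\binom{n+d-k-1}{d-k}$ gives the injectivity of $\psi$, which the paper uses only implicitly.

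In the second case, $s>\binom{n+k-1}{k}$, the paper just says that, the forms being general, ``no additional relations occur.'' Your lower-semicontinuity argument is an actual proof of this: specializing $\ell_{N+1},\dots,\ell_s$ to $0$ lands on $F_N$ with $N=\binom{n+k-1}{k}$, whose rank is $N$ by the first case. You were also right that $\psi$ can fail to be injective for large $s$, so the factorization alone would not settle this case.

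The paper's version is shorter; yours is complete. The aborted attempt with a subset of $N$ points in your last case can simply be deleted.
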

\begin{proof}
  By the Equation (\ref{computingHilbertFunctionGorenstein}), the $k$-th homogeneous component $[A_{F_s}]_k$ of  $A_{F_s}$ can be  identified with the vector subspace of $S_{d-k}$
generated by the  $k$-th  partial derivatives of $F_s$. On the other hand, for a linear general form
\begin{eqnarray*}
\ell_i=a_{i}^1X_1+\cdots+a_{i}^nX_n,
\end{eqnarray*}
we obtain:
\begin{eqnarray*}
\frac{\partial^k}{\partial x_{j_1}\cdots\partial x_{j_k}}(\ell_i^d)
=
\frac{d!}{(d-k)!}
a_{i}^{j_1}\cdots a_{i}^{j_k}\ell_i^{d-k},
\end{eqnarray*}
and, we get:

$$
\begin{array}{rcl}
\frac{\partial^kF_s}{\partial x_{j_1}\cdots\partial x_{j_k}} &
= & 
\frac{\partial^k}{\partial x_{j_1}\cdots\partial x_{j_k}}(\sum _{i=1}^s\ell_i^d )\\  \\
& = &
\frac{d!}{(d-k)!}\sum _{i=1}^s
a_{i}^{j_1}\cdots a_{i}^{j_k}\ell_i^{d-k}.
\end{array}
$$

Moreover, we have
$$[A_{F_s}]_k=\left \langle  \frac{\partial^kF_s}{\partial x_{j_1}\cdots\partial x_{j_k}} \ \mid  \ 1\le j_1\le \cdots \le j_k\le n   \right\rangle .$$
Since $\left[A_{F_s}\right]_k=\operatorname{Span}\left\{\ell_1^{d-k}, \ldots, \ell_s^{d-k}\right\},$ its dimension is bounded above by both $\operatorname{dim}_{\mathbb{K}} R_k=\binom{n+k-1}{k}$ and by the number of generators $s$. Hence, $h_k \leq \min \left\{\binom{n+k-1}{k}, s\right\}.$

Now suppose first that
$s \leq\binom{ n+k-1}{k}$. As the linear forms $\ell_1, \ldots, \ell_s$ are general, the powers $\ell_1^{d-k}, \ldots, \ell_s^{d-k}$
are linearly independent in $S_{d-k}$. Therefore, $h_k=s.$

On the other hand, if $s > \binom{n+k-1}{k},$ then the only possible linear relations among the $k$-th derivatives are those coming from the space of differential operators itself. Since the linear forms are general, no additional relations occur, and consequently $h_k=\operatorname{dim}_{\mathbb{K}} R_k=\binom{n+k-1}{k}.$

Thus, for any $k$ such that $0\leq k\leq\lfloor\frac{d}{2}\rfloor$, it holds that
$$h_k=\dim_{\mathbb K}[A_{F_{s}}]_k
=\min\left\{\binom{n+k-1}{k},\, s\right\} $$
and, since $A_{F_s}$ is an AG $\K$-algebra  of socle degree $d$, the Hilbert function of $A_{F_s}$ is symmetric (i.e. $h_k = h_{d-k}$) and this finishes the proof. 
\end{proof}

\begin{example}
We  consider the AG $\K$-algebra $A_{F_s}$ of codimension 5 and socle degree 7 with Macaulay dual generator $F_s=\ell_1^7+\cdots +\ell_s^7\in \K [X_1,X_2,X_3,X_4,X_5]$ where $\ell_1, \ldots ,\ell_s$ are general linear forms. Applying Proposition \ref{h_vector} we get:

\vskip 2mm
\begin{itemize}
    
\item[(i)] If $F_1=X_1^7$ then the $h$-vector of  $A_{F_1}$ is  (1,1,1,1,1,1,1,1);

\item[(ii)] If $F_5=X_1^7+\cdots +X_5^7$ then the $h$-vector of  $A_{F_5}$ is  (1,5,5,5,5,5,5,1);

\item[(iii)] If $F_7=\ell _1^7+\cdots +\ell_7^7$ then the $h$-vector of  $A_{F_7}$ is  (1,5,7,7,7,7,5,1);

\item[(iv)]  If $F_{16}=\ell _1^7+\cdots +\ell_{16}^7$ then the $h$-vector of  $A_{F_{16}}$ is  (1,5,15,16,16,15,5,1); and

\item[(v)]  If $F_s=\ell _1^7+\cdots +\ell_s^7$, $s\ge 35$ then the $h$-vector of  $A_{F_s}$ is  (1,5,15,35,35,15,5,1).
\end{itemize}
\end{example}

\begin{definition} \label{def_wr}
    Let $0 \ne  f \in \K[x_1,\ldots, x_n]$ be a homogeneous polynomial of degree $d$.
The {\em Waring rank} of $f$, which will be denoted by $Wr(f)$, is defined as the minimum number
of terms in a expression of $f$ as a linear combination of powers of linear forms:
$$f =\sum _{i=1}^{Wr(f)} \ell _i^d
$$
where the $\ell_i \in \K[x_1,\ldots, x_n]$ are linear forms.
\end{definition}

\begin{corollary} Fix integers $d,s\ge 1$ and $n\ge 2$. Consider a homogeneous form $F_s\in \K[X_1,\ldots ,X_n]$ of degree $d$. Assume $s\le {\lfloor d/2 \rfloor+n-1\choose n-1}$ and  $F_s=\ell_1^d+\cdots +\ell_s^d$ where $\ell_1, \ldots ,\ell_s$ are general linear forms in $\K[X_1,\ldots ,X_n]$. Then,  
    $Wr(F_s)=s$.
\end{corollary}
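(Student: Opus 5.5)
The plan is to squeeze $Wr(F_s)$ between two bounds. The expression $F_s=\ell_1^d+\cdots+\ell_s^d$ already gives $Wr(F_s)\le s$, so only $Wr(F_s)\ge s$ needs proof. For this I will use the catalecticant (apolar) lower bound: the Hilbert function of $A_{F_s}$ bounds the Waring rank from below.

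First I establish the bound $h_k\le Wr(F)$ for any form $F$ of degree $d$ and any $k$. Suppose $F=\sum_{i=1}^r m_i^d$ with $r=Wr(F)$. By the computation in the proof of Proposition \ref{h_vector}, every $k$-th partial derivative of $F$ is a linear combination of $m_1^{d-k},\dots,m_r^{d-k}$. By Equation (\ref{computingHilbertFunctionGorenstein}), $h_k=\dim_{\K}[A_F]_k$ equals the dimension of the span of these $k$-th partials. Hence $h_k\le r$ for every $0\le k\le d$. Over an algebraically closed field, linear combinations of $d$-th powers can be rescaled into sums of $d$-th powers, so there is no ambiguity in the definition of $Wr$.

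Next I take $k=\lfloor d/2\rfloor$. By Proposition \ref{h_vector},
$$h_{\lfloor d/2\rfloor}=\min\left\{\binom{n+\lfloor d/2\rfloor-1}{\lfloor d/2\rfloor},\, s\right\}.$$
The binomial coefficient equals $\binom{\lfloor d/2\rfloor+n-1}{n-1}$, which is at least $s$ by hypothesis, so $h_{\lfloor d/2\rfloor}=s$. Combining with the first step gives $s=h_{\lfloor d/2\rfloor}\le Wr(F_s)\le s$, which is the claim.

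The main obstacle is the genericity input, namely that the powers $\ell_1^{d-k},\dots,\ell_s^{d-k}$ are linearly independent for general $\ell_i$ when $s\le\dim [S]_{d-k}$ and $k=\lfloor d/2\rfloor$. This fact is used inside Proposition \ref{h_vector}, so I will take it as given. If one wanted it self-contained, the argument is that $(d-k)$-th powers of linear forms span $[S]_{d-k}$. Since $d-k\ge k$, we have $s\le\binom{n+k-1}{k}\le\binom{n+d-k-1}{d-k}$. One can therefore choose $s$ linear forms whose powers are independent, and independence is a Zariski open condition, so it holds for general $\ell_i$.

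The rest of the argument is bookkeeping. There is one subtlety: the span of the partials must be computed inside $[S]_{d-k}$, and here we need $d-k\ge k$, which holds for $k=\lfloor d/2\rfloor$.
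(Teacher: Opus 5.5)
Your argument is correct and is essentially the paper's: the paper also bounds every $h_k$ by $r=Wr(F_s)$, because all $k$-th partials lie in the span of $L_1^{d-k},\dots,L_r^{d-k}$, and then uses that the Sperner number of $A_{F_s}$ equals $s$, which is exactly your $h_{\lfloor d/2\rfloor}=s$.

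One caveat on your optional self-contained remark: independence of $\ell_1^{d-k},\dots,\ell_s^{d-k}$ alone does not give $h_k=s$. The $k$-th partials only span the image of $\alpha\mapsto\sum_i\alpha(P_i)\,\ell_i^{d-k}$, where $P_i$ is the point dual to $\ell_i$. So you also need the $P_i$ to impose independent conditions on forms of degree $k$, equivalently $\ell_1^{k},\dots,\ell_s^{k}$ independent. That is exactly where the hypothesis $s\le\binom{\lfloor d/2\rfloor+n-1}{n-1}$ enters.
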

\begin{proof}  By definition,  we  have $Wr(F_{s}) \le s$ and, by Proposition \ref{h_vector}, we know the Sperner number of $A_{F_s}=R/\ann(F_s)$, i.e., $S_{A_{F_s}}=s$.  Suppose that $W r\left(F_s\right)=r$. Then $F_s=L_1^d+\cdots+L_r^d$ for some linear forms $L_1, \ldots, L_r$. By Equation (\ref{computingHilbertFunctionGorenstein}), the homogeneous component $\left[A_{F_s}\right]_k$ is generated by the $k$-th partial derivatives of $F_s$. Since each $k$-th partial derivative is a linear combination of the forms $L_1^{d-k}, \ldots, L_r^{d-k}$ it follows that $\operatorname{dim}_{\mathbb{K}}\left[A_{F_s}\right]_k \leq r$ for every $k.$ Therefore, $S_{A_{F_s}} \leq r=W r\left(F_s\right).$ Since $S_{A_{F_s}}=s$, we obtain $s \leq W r\left(F_s\right).$ Together with the inequality $\operatorname{Wr}\left(F_s\right) \leq s$, this yields
$W r\left(F_s\right) = s$.
\end{proof}

We will now prove the main result of this Section. Let us start with a technical result that will play an important role in our proof.

\begin{proposition}
\label{exactsequence}
Let $R/I$ be an AG $\K$-algebra and set $I=\operatorname{Ann}(F)$. Then, for every linear form $\ell\in [R/I]_1$ the sequence
\begin{equation}
\label{seq}
0\longrightarrow \frac{R}{(I:\ell)}(-1)\xrightarrow[]{\times\ell} \frac{R}{I}\longrightarrow \frac{R}{(I,\ell)}\longrightarrow 0
\end{equation}
is exact. Moreover $\frac{R}{(I:\ell)}$ is an AG $\K$-algebra with $\ell\circ F$ as dual generator.
\end{proposition}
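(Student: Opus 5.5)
The plan is to prove exactness directly from the definitions, and then the Gorenstein statement by identifying the colon ideal with an annihilator through Macaulay--Matlis duality. Throughout, $\ell$ denotes a linear form of $R$, i.e.\ an element of $[R]_1$ acting on $S$ by differentiation.

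For exactness, consider the multiplication map $\times\ell: R(-1)\to R/I$, which is degree preserving. Its image is $(I,\ell)/I$, so its cokernel is $R/(I,\ell)$, and the natural projection $R/I\to R/(I,\ell)$ is surjective. Its kernel is $\{g\in R : \ell g\in I\}(-1) = (I:\ell)(-1)$. Hence $\times\ell$ descends to an injective map $R/(I:\ell)(-1)\to R/I$ whose image equals the kernel of the projection. This gives the exact sequence (\ref{seq}); it holds for any ideal $I$.

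For the second assertion, I would prove that $(I:\ell)=\ann_R(\ell\circ F)$. Indeed, $g\in(I:\ell)$ if and only if $(g\ell)\circ F=0$. Since the apolar action makes $S$ an $R$-module and $R$ is commutative, $(g\ell)\circ F = g\circ(\ell\circ F)$. So $g\in(I:\ell)$ if and only if $g\in\ann_R(\ell\circ F)$.

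Now $\ell\circ F$ is homogeneous of degree $c-1$, where $c=\deg F$. If $\ell\circ F\neq 0$, Theorem \ref{Gor} shows that $R/\ann_R(\ell\circ F)=R/(I:\ell)$ is an AG $\K$-algebra of socle degree $c-1$ with dual generator $\ell\circ F$.

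The only delicate point is the degenerate case $\ell\circ F=0$, which happens exactly when $\ell\in I$, i.e.\ $\ell=0$ in $[R/I]_1$. Then $(I:\ell)=R$ and the quotient is zero. I would handle this by interpreting the statement for $\ell\neq 0$ in $[R/I]_1$, which is the meaningful reading of "$\ell\in[R/I]_1$". In that case $\ell\circ F\neq 0$ by the perfect pairing: otherwise $\ell$ would annihilate $F$ and hence lie in $I$.

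As an optional consistency check, the Hilbert functions satisfy $h_{R/(I:\ell)}(i-1)=\dim_\K\langle \partial^{i-1}(\ell\circ F)\rangle$, in agreement with Equation (\ref{computingHilbertFunctionGorenstein}). I expect no real obstacle; the main care needed is the grading shift and the nonvanishing of $\ell\circ F$.
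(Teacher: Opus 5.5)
Your proof is correct and follows the paper's. Computing the kernel and cokernel of $\times\ell\colon R(-1)\to R/I$ is equivalent to the paper's splitting of the four-term sequence $0\to (I:\ell)/I(-1)\to R/I(-1)\to R/I\to R/(I,\ell)\to 0$, and the identity $(I:\ell)=\operatorname{Ann}_R(\ell\circ F)$ followed by Theorem \ref{Gor} is exactly the paper's argument. Your extra care that $\ell\notin I$, so that $\ell\circ F\neq 0$, is a harmless refinement the paper leaves implicit; it follows directly from $I=\operatorname{Ann}(F)$, with no need for the perfect pairing.
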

\begin{proof}
We get the result cutting the exact sequence
\[
0\longrightarrow \frac{(I:\ell)}{I}(-1)
\longrightarrow \frac{R}{I}(-1)\xrightarrow{\,\,\,\times\ell\,\,\,}\frac{R}{I}\longrightarrow \frac{R}{(I,\ell)}\longrightarrow 0
\]
into two short exact sequences. As for the second fact, notice that 
$$
(\text{Ann}(F) : \ell)=\text{Ann}(\ell\circ F).
$$
Indeed,
\begin{align*}
    f\in \text{Ann}(\ell\circ F) &\iff f\circ(\ell\circ F)=0 \\
                           &\iff (f\ell)\circ F=0 \\
                           &\iff f\ell \in \text{Ann}(F) \\
                           &\iff f\in (\text{Ann}(F) : \ell).
\end{align*}
Now the result follows from Theorem \ref{Gor}.
\end{proof}

\begin{theorem} \label{WLP_main}
We consider the AG $\K$-algebra $A_{F_{s}} = R/\operatorname{Ann}(F_{s}),$ of codimension $n$, socle degree $d$ and Macaulay dual generator 
\begin{eqnarray*}
F_{s}=\ell_1^d+\cdots+\ell_s^d
\end{eqnarray*}
where $\ell_1,\ldots,\ell_s$ are general linear forms in $[S]_1$, and $1 \le s \le \binom{d+n-1}{d}$.                 
Then, $A_{F_s}$ has the WLP. 
\end{theorem}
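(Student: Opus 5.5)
By Remark \ref{rmk}(2), it suffices to exhibit one linear form $\ell\in[R]_1$ for which $\times\ell:[A_{F_s}]_{i}\to[A_{F_s}]_{i+1}$ is surjective at $i=\lfloor d/2\rfloor$ when $d$ is odd, and more generally in every degree $i\ge \lfloor d/2\rfloor$. I will work dually. Under Macaulay duality, $[A_{F_s}]_j$ is identified with $W_{d-j}:=\operatorname{Span}\{\ell_1^{d-j},\dots,\ell_s^{d-j}\}\subset S_{d-j}$ (see the proof of Proposition \ref{h_vector}). Multiplication by $\ell\in R_1$ on $A_{F_s}$ corresponds to applying the derivation $\ell\circ$, which maps $W_{d-j}\to W_{d-j-1}$. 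Since $\ell\circ \ell_i^{m}=m\,\ell(\ell_i)\,\ell_i^{m-1}$, where $\ell(\ell_i)$ is the pairing of $\ell$ with $\ell_i$, the map is diagonal on the spanning sets:
$$\ell\circ\Big(\sum_i c_i\ell_i^{m}\Big)=m\sum_i c_i\,\ell(\ell_i)\,\ell_i^{m-1}.$$

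The first step is to choose $\ell$ general, so that $\ell(\ell_i)\neq 0$ for all $i$; this is possible because we avoid finitely many hyperplanes in $R_1$. The image of $\ell\circ: W_m\to W_{m-1}$ then contains every $\ell_i^{m-1}$, up to the nonzero scalar $m\,\ell(\ell_i)$, provided $m\ge 1$. Hence the map is surjective onto $W_{m-1}$ for every $m\ge1$. Translating back, $\times\ell:[A_{F_s}]_{j}\to[A_{F_s}]_{j+1}$ is surjective for every $j$ with $d-j\ge 1$, that is, in all degrees. Surjectivity in a degree where $h_j\le h_{j+1}$ forces $h_j=h_{j+1}$, which makes the map bijective there. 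So in every degree the map is surjective, and it is injective whenever $h_j\le h_{j+1}$; in either case it has maximal rank. Alternatively, one can invoke Remark \ref{rmk}(2) directly: surjectivity for $j\ge\lfloor d/2\rfloor$ together with Gorenstein duality gives injectivity in the lower half.

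The only delicate point is making the identification $[A_{F_s}]_j\cong W_{d-j}$ compatible with multiplication. I would do this by noting that the isomorphism $\alpha\mapsto\alpha\circ F_s$ satisfies $(\ell\alpha)\circ F_s=\ell\circ(\alpha\circ F_s)$, so multiplication by $\ell$ on $A_{F_s}$ is intertwined with differentiation by $\ell$ on the space of derivatives. Proposition \ref{exactsequence} encodes the same fact, through $(I:\ell)=\operatorname{Ann}(\ell\circ F_s)$. One could also phrase the argument via the exact sequence (\ref{seq}): surjectivity amounts to $[R/(I,\ell)]_{j+1}=0$ for $j\ge\lfloor d/2\rfloor$. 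The direct dual argument avoids this computation. I expect no serious obstacle, since surjectivity of a diagonal map on spanning sets with nonzero weights is immediate. The proof uses generality only through the choice of $\ell$, and through the $\ell_i$ only via the $h$-vector, which is needed just to state the maximal rank conclusion.
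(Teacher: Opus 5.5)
\textbf{There is a genuine gap: the proof is incomplete for odd $d$.}

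Your argument rests on the identification $[A_{F_s}]_j\cong W_{d-j}$, and this is false in general. Duality only gives
$[A_{F_s}]_j\cong R_j\circ F_s=\{\sum_i\alpha(P_i)\,\ell_i^{d-j}:\alpha\in R_j\}\subseteq W_{d-j}$,
with equality exactly when $h_j=\dim W_{d-j}=\min\{s,\binom{n+d-j-1}{n-1}\}$. (The proof of Proposition~\ref{h_vector} states this as an equality, but in general only the inclusion holds.)

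Already for $j=0$ the left side is $\K F_s$, not $W_d$. That is why you ``prove'' that $\times\ell$ is surjective in every degree. This is absurd, since $h_0=1<h_1=\min\{n,s\}$ once $s\ge 2$. Your own observation that surjectivity forces $h_j=h_{j+1}$ should have raised the alarm.

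Your diagonal computation does establish something correct. If $R_j\circ F_s=W_{d-j}$, then $\ell\circ(R_j\circ F_s)=W_{d-j-1}\supseteq R_{j+1}\circ F_s$, so $\times\ell:[A_{F_s}]_j\to[A_{F_s}]_{j+1}$ is onto. By Proposition~\ref{h_vector} and symmetry, this equality holds for all $j\ge\lceil d/2\rceil$. So for $d$ even you get surjectivity from $j=d/2=\lfloor d/2\rfloor$ on, and your proof is complete in that case. It is also complete for $d=2m+1$ odd when $s\le\binom{n+m-1}{m}$.

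The real gap is $d=2m+1$ with $s>\binom{n+m-1}{m}$. Remark~\ref{rmk}(2) needs the middle map $\times\ell:[A_{F_s}]_m\to[A_{F_s}]_{m+1}$ to be bijective. But now $R_m\circ F_s$ is a proper subspace of $W_{m+1}$, and the diagonal action on the spanning set $\{\ell_i^{m+1}\}$ says nothing about the image of that subspace. Surjectivity for $j\ge m+1$ plus duality only gives injectivity for $j\le m-1$.

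The condition $\ell(\ell_i)\neq 0$ is genuinely insufficient here. Take $F=X^3+Y^3+(X+Y)^3$, which has $h$-vector $(1,2,2,1)$, and $\ell=ax+by$. Then $\ell\circ F=3\bigl((2a+b)X^2+2(a+b)XY+(a+2b)Y^2\bigr)$. This is a nonzero square when $a^2+3ab+b^2=0$, even though then $a$, $b$, $a+b$ are all nonzero. For such $\ell$ the map $[A_F]_1\to[A_F]_2$ has rank $1<2$.

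The missing idea is that the image of the middle map is $R_m\circ(\ell\circ F_s)$, so its rank equals $\HF_{A_{\ell\circ F_s}}(m)$. Now $\ell\circ F_s=d\sum_i\ell(\ell_i)\,\ell_i^{d-1}$. Absorbing the scalars into the $\ell_i$ (using that $\K$ is algebraically closed), this is again a sum of $(d-1)$-th powers of $s$ general linear forms. Proposition~\ref{h_vector} then gives $\HF_{A_{\ell\circ F_s}}(m)=\min\{s,\binom{n+m-1}{m}\}=h_{m+1}$.

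This is exactly the paper's argument. It uses the exact sequence (\ref{seq}) with $(I:\ell)=\ann(\ell\circ F_s)$, and compares the $h$-vectors of $A_{\ell\circ F_s}$ and $A_{F_s}$ to get $[R/(\ann(F_s),\ell)]_t=0$ for $t\ge\lfloor d/2\rfloor+1$. That is precisely the computation you chose to avoid.

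It also shows your closing remark is wrong. In the odd case, $\ell$ must avoid more than the hyperplanes $\ell(\ell_i)=0$. It is the generality of the $\ell_i$ relative to $\ell$ that makes the middle map bijective, not merely something needed to state the $h$-vector.
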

\begin{proof}
   We argue by induction on the socle degree $d$ of $A_{F_{s}}$.
   
    For $d = 1$, the $h$-vector of  $A_{F_s}$ is  (1, 1). Therefore, the multiplication by any nonzero linear form has maximal rank, and $A_{F_s}$ has the WLP.
    
Assume the result holds for any AG $\K$-algebra of codimension $n$ and socle degree $d-1$ whose dual generator is $ F_{s,d-1}=L_1^{d-1}+\cdots+L_s^{d-1}$ where 
$L_1,\dots,L_s$ are general linear forms in $[S]_1$.  We will now prove that $A_{F_s}$ has the WLP. To this end we consider the linear form $\ell =x_1+\cdots +x_n$ and the exact sequence  (\ref{seq})
\begin{equation}
\label{seq1}
0\longrightarrow \frac{R}{\ann(\ell \circ F_s)}(-1)\xrightarrow[]{\times\ell} \frac{R}{\ann (F_s)}\longrightarrow \frac{R}{(\ann(F_s),\ell)}\longrightarrow 0
\end{equation}
given in Proposition \ref{exactsequence}. 

A straightforward computation shows that $\ell \circ F_s=L_1^{d-1}+\cdots+L_s^{d-1}$ where 
$L_1,\dots,L_s$ are general linear forms in $[S]_1$. Therefore, the $h$-vectors of $\frac{R}{\ann(\ell \circ F_s)}$ and $\frac{R}{\ann (F_s)}$ are given by Proposition \ref{h_vector}. This  allows us to deduce that $\left[\frac{R}{(\ann(F_s),\ell)}\right]_t=0$ for all $t\ge \left\lfloor \frac{d}{2} \right\rfloor+1$ or, equivalently,

$$
\times \ell : [A_{F_s}]_{t-1} \longrightarrow [A_{F_s}]_t,
$$
is surjective for all  $t\ge \left\lfloor \frac{d}{2} \right\rfloor+1$. Finally, applying Remark \ref{rmk}(2) we deduce that $A_{F_s}$ has the WLP.
\end{proof}


\section{The minimal free resolution of $A_{F_s}$}\label{section4}

 Let $M$ be a finitely generated  $R$-module. It is well known that it has a minimal graded free $R$-resolution of the following type:
$$
0\longrightarrow F_{n+1} 
\longrightarrow F_n \longrightarrow \cdots  \longrightarrow F_i \longrightarrow \cdots \longrightarrow F_1\longrightarrow  F_0\longrightarrow M\longrightarrow 0 
$$
where
$$ F_i=\bigoplus _jR(-j)^{\beta_{ij}^R(M)}
$$
and the graded Betti numbers $\beta_{ij}^R(M)$ of $M$ over $R$ are defined as usual as  the integers
$$
\beta_{i,j}:=\beta_{ij}^R(M)=\dim_\K [\Tor^R_i(M,\K)]_j.
$$ 

These homological invariants are one of  our main focus and indeed our goal in this Section is to determine the graded Betti numbers $\beta _{ij}^R(A_{F_s})$ of $A_{F_s}$.

 A compact way to display the graded Betti numbers $\beta _{i,j}$  of a finitely generated $R$-module is the {\em Betti} table, that is,

    \vskip 4mm
    
    \begin{center}
    \begin{tabular}{ r | c c c c}
           &  0 &  1 &  $\cdots$ & $n+1$ \\
    \hline
     $\vdots$ &  $\vdots$ &  $\vdots$ & $\ddots$ & $\vdots$ \\
       $k$ &  $\beta_{0, k}$ & $\beta_{1, k+1}$ & $\cdots$ &  $\beta_{n + 1, k +n+1}$ \\ 
      $k+1$ &  $\beta_{0, k+1}$ & $\beta_{1, k+2}$ & $\cdots$ & $\beta_{n + 1,k+n+2}$ \\
 $\vdots$ &  $\vdots$ &  $\vdots$ & $\ddots$ & $\vdots$ \\
    \end{tabular}
    \end{center}

We will start with a toy example that will illustrate our general approach.
\begin{definition}\label{canonicalmodule}
    The {\em canonical module}  of a graded $R$-module $M$ is defined:
$$
\omega_R:=\Ext^{n+1-\dim M}_R(M,R(-n-1)).$$
    
\end{definition}
\begin{example}
Let $A_{F_5}$ be the AG $\K$-algebra of codimension 3 and socle degree 7 with Macaulay dual generator  $$F_5=X^7+Y^7+Z^7+(X+Y+Z)^7+(X-Y+2Z)^7.$$

Note that these five linear forms are general in the sense that no three of them share a common zero set in $\mathbb{P}^2$, equivalently, their corresponding dual points in $\mathbb{P}^2$ are in linear general position (no three are collinear).

By Proposition  \ref{h_vector}  it has $h$-vector $(1, 3, 5, 5, 5,5, 3,1)$ and  straightforward computation shows that 
\begin{eqnarray*}
    \ann_R(F_5) =  &&(4xy-xz-3yz,4y^2z-3xz^2-yz^2,4x^2z-3xz^2-yz^2,\\
    &&128y^6-
      xz^5-127yz^5-64z^6,128x^6-387xz^5+3yz^5+192z^6)
\end{eqnarray*}

We now consider the 0-dimensional subscheme $P\subset \PP^2=\Proj(\K[x,y,z])$, $P=\{(1:0:0),(0:1:0),(0:0:1),(1:1:1),(1:-1:2)\}$ with homogeneous ideal $I(P)=(4xy-xz-3yz, 4y^2z-3xz^2-yz^2, 4x^2z-3xz^2-yz^2)$.  The minimal free $R$-resolution of $R/I(P)$ is:
$$0 \longrightarrow R(-4)^2 \xlongrightarrow{\varphi_2} \begin{array}{c} R(-2) \\ \oplus \\ R(-3)^2 \end{array} \xlongrightarrow{\varphi_1} R \longrightarrow R/I(P) \longrightarrow 0$$
where
$$\begin{aligned}
\varphi_1 &=  \begin{pmatrix} xy - \frac{1}{4}xz - \frac{3}{4}yz & y^2z - \frac{3}{4}xz^2 - \frac{1}{4}yz^2 & x^2z - \frac{3}{4}xz^2 - \frac{1}{4}yz^2 \end{pmatrix}\\
\varphi_2 &= \begin{pmatrix} -xz & -yz \\ \frac{1}{4}z & x - \frac{3}{4}z \\ y - \frac{1}{4}z & \frac{3}{4}z \end{pmatrix}.
\end{aligned}$$
        From Definition  \ref{canonicalmodule} we have $\omega_{R/I(P)} \cong \operatorname{Ext}^2_R(R/I(P), R(-3))$. Therefore, dualizing the resolution of $R/I(P)$ and twisting by $-3$ yields the minimal free resolution of $\omega_{R/I(P)}$. To match the socle degree $c = 7$ of $A_{F_5}$, we must further twist this complex by $-7$. 
        
       Dualizing and twisting by $-10$ we get the minimal free $R$-resolution of $\omega_{R/I(P)}(-7)$. That is:
    $$
    0 \longrightarrow R(-10) \longrightarrow \begin{array}{c} R(-8) \\ \oplus \\ R(-7)^2 \end{array} \longrightarrow R(-6)^2 \longrightarrow \omega_{R/I(P)}(-7) \longrightarrow 0.
    $$

The lowest degree generators of $\omega_{R/I(P)}(-7)$ appear in degree 6. If we send the two generators of $\omega_{R/I(P)}(-7)$, which have degree $6$, to the generators $128y^6-
      xz^5-127yz^5-64z^6$ and $128x^6-387xz^5+3yz^5+192z^6$ of the ideal $\ann_R(F_5)$, we obtain the following short exact sequence:
    \begin{equation}\label{aux_seq}     
 0 \longrightarrow \omega_{R/I(P)}(-7) \xlongrightarrow{\psi} R/I(P) \longrightarrow A_{F_5}=R/\ann_R(F_5)\longrightarrow 0.
       \end{equation}

       Finally, using the exact sequence (\ref{aux_seq}) and applying the mapping cone process (the details of this construction are presented in the discussion following Definition \ref{doubling}). We get the following minimal free resolution of $A_{F_5}$:

           $$
    0 \longrightarrow R(-10) \longrightarrow \begin{array}{c} R(-8) \\ \oplus \\ R(-7)^2 \\ \oplus \\ R(-4)^2 \end{array} \longrightarrow 
     \begin{array}{c} R(-2) \\ \oplus \\ R(-3)^2  \\ \oplus \\ R(-6)^2 \end{array} 
 \longrightarrow A_{F_5} \longrightarrow 0
    $$
 \end{example}

The main idea behind the above example is the so-called doubling construction that we will recall now for sake of completeness.

\begin{definition}\label{doubling} Let $J\subset R$ be a homogeneous ideal of codimension $c$, such that $R/J$ is Cohen-Macaulay and $\omega_{R/J}$ is its canonical module. Furthermore, assume that $R/J$ satisfies the condition $G_0$ (i.e., it is Gorenstein at all minimal
primes). 
    Let $I$ be an ideal of codimension $c + 1$. $I$ is called a {\em doubling } of $J$ via $\psi$ if there exists  a short exact sequence of $R/J$ modules
\begin{equation}\label{eq:doubling}
0 \longrightarrow \omega_{R/J}(-d)\stackrel{\psi}{\longrightarrow} R/J \longrightarrow R/I\longrightarrow 0.
\end{equation}

By \cite[Proposition 3.3.18]{BH93}, if $I$ is a doubling, then $R/I$ is a Gorenstein ring. 
\end{definition}

Doubling plays an important role in the theory of Gorenstein liaison. Indeed, in \cite{KMMNP}, doubling is used to produce suitable Gorenstein divisors on arithmetically Cohen-Macaulay subschemes in several foundational constructions.
Doubling will also be crucial in this Section. Indeed,  the mapping cone of $\psi$ in \eqref{eq:doubling} gives a resolution of $R/I $. If it is minimal, then
one can read off the Betti table of $R/I$ from the Betti table of $R/J$. This mapping cone is the direct sum of the minimal free resolution $F_{\bullet }$ of $R/J$ with its dual (reversed) complex $\Hom(F_{\bullet},R)$ which justifies the terminology of "doubling".

\begin{Notation}
    For any linear form $\ell_i=a_i^1X_1+\cdots +a_i^nX_n\in [S]_1$ we will denote by $P_i\in \PP^{n-1}=\Proj(R)$ the point of homogeneous coordinates $P_i=(a_i^1:\cdots :a_i^n)$, i.e., the point $P_i$ corresponds to the dual of the linear form $\ell _i$. For any form $F_s=\ell_1^d+\cdots +\ell_s^d\in [S]_d$  where $\ell_1, \ldots ,\ell_s$ are general linear forms, we set $Z_{F_s}:=\{P_1,\ldots ,P_s\}\subset \PP^{n-1}$. We denote by $r_s$ the unique integer such that
  \vskip 1mm  $$ {n-2+r_s\choose n-1}<s\le {n-1+r_s\choose n-1}.$$
  \vskip 2mm
\end{Notation}

It is worthwhile to observe that under the hypothesis that $\ell_1, \ldots ,\ell_s$ are general linear forms, the Hilbert function of $R/I(Z_{F_s})$ is given by
 \vskip 1mm   
 $$
(1, \ n, \ {n+1\choose 2}, \ {n+2\choose 3}, \ \cdots  , \ \ {n+r_s-2\choose r_s-1}, \ s, \ s,  \cdots  ).
$$

 \vskip 1mm  
\begin{definition}  Fix integers $d,s\ge 1$ and $n\ge 2$ and let  $A_{F_s}$ be an  AG $\K$-algebra  of codimension $n$, socle degree $d$ and Macaulay dual generator $F_s=\ell_1^d+\cdots +\ell_s^d$ where $\ell_1, \ldots ,\ell_s$ are general linear forms. The 0-dimensional subscheme  $Z_{F_s}\subset \mathbb{P}^{n-1}$ is called a {\em tight annihilating scheme} of $A_{F_s}$.  
\end{definition}

\begin{definition}\label{compressed}
A \textit{compressed} AG $\K$-algebra of codimension $c$ and socle degree \(d\) is one for which the $h$-vector is as big as possible. So, its Hilbert function is
    $$
    \left( 1,\tbinom{c}{c-1},\dots,\tbinom{\frac{d}{2}+c-1}{c-1},\dots,\tbinom{c}{c-1},1 \right)
    $$
    if $d$ is even, and 
    $$
    \left( 1,\tbinom{c}{c-1},\dots,\tbinom{\frac{d-1}{2}+c-1}{c-1},\tbinom{\frac{d-1}{2}+c-1}{c-1},\dots,\tbinom{c}{c-1},1 \right)
    $$
    if $d$ is odd, thanks to the symmetry of the $h$-vector of any AG $\K$-algebra.    
\end{definition}

\begin{remark}\label{key2}
If $r_s>  \lfloor\frac{d}{2}\rfloor $, the $A_{F_s}$ is compressed and the MFR of $A_{F_s}$ is well known (see \cite{B} and \cite{MMN}).
So, from now on we will assume that  $r_s\le  \lfloor\frac{d}{2}\rfloor $. In this case, we have:
\begin{itemize}
    \item[(i)] $Z_{F_s}$ is the unique 0-dimensional scheme of length $s=S_{A_{F_s}}$ (the Sperner number of $A_{F_s}$);
    \item[(ii)] $I(Z_{F_s})\subset \ann_R(F_s)$;
    \item[(iii)] $
\HF_{R/I(Z_{F_s})}(t)=
\begin{cases}\binom{n+t-1}{t} \text{ for }  t\le r_s\\
S_{A_{F_s}} \text{ for } t> r_s;
\end{cases}
$
\item[(iv)] The quotient $\ann_R(F_s)/I(Z_{F_s})$ defines the dualizing sheaf of $R/I(Z_{F_s})$;
\item[(v)] $A_{F_s}$ is a doubling of $R/I(Z_{F_s})$;
\item[(vi)] The minimal free $R$-resolution of $A_{F_s}$
can be explicitly computed from the minimal free $R$-resolution of  $R/I(Z_{F_s})$.
\end{itemize}
\end{remark}

Therefore, the minimal free $R$-resolution  of a general set of points on a projective space will play a crucial role in determining a minimal free $R$-resolution of $A_{F_s}$ since the graded Betti numbers of the AG $\K$-algebra $A_{F_s}$ only depend on the graded Betti numbers of the tight annihilating scheme $Z_{F_s}$ of $A_{F_s}$. 

Let us recall what we know about  the minimal free resolution of a general set of points on a projective space. In \cite{L}, A. Lorenzini  predicted that the resolution is as simple as possible (having only two non-trivial rows for the Betti numbers). More precisely, we have:

\vskip 4mm
\noindent {\bf Minimal resolution conjecture (MRC):}  If $\Gamma \subset \PP^m$ is a general set of points, then for any integers $i, j,$ at most one of $\beta _{i,j}$ and $\beta_{i+1,j}$  is nonzero (i.e.  $\beta_{i,j}\cdot \beta_{i+1,j}=0$)   and the MFR of $I(\Gamma)$ has the following shape:

$$
0\longrightarrow  \begin{array}{c} R(-r_s-m)^{\beta_{m,r_s+m}} \\ \oplus \\ R(-r_s-m+1)^{\beta_{m,r_s+m-1}} \end{array}\longrightarrow \cdots
$$
\vskip 2mm
$$ \cdots
\longrightarrow \begin{array}{c} R(-r_s-2)^{\beta_{2,r_s+2}} \\  \oplus \\ R(-r_s-1)^{\beta_{2,r_s+1}} \end{array}
\longrightarrow \begin{array}{c} R(-r_s-1)^{\beta_{1,r_s+1}} \\ \oplus \\ R(-r_s)^{\beta_{1,r_s}} \end{array} \longrightarrow R \longrightarrow R/I(\Gamma) \longrightarrow 0.
$$

\vskip 4mm
The Minimal Resolution Conjecture has been proved or disproved in the following cases:

\begin{itemize}
    \item 
For any number of points in $\PP^2$ by Geramita and Maroscia \cite{GM};
\item For any number of points in $\PP^3$ by Ballico and Geramita \cite{BG};
\item For large numbers of points in any $\PP^r$ by Hirschowitz and Simpson \cite{HS};
\item For $r\ge 6$, $r\ne 9$, the MRC is false \cite{EPSW}.
\end{itemize} 
We are now ready to state the main result of this Section.

\begin{theorem}\label{mainthm1} Fix integers $d,s\ge 1$ and $n\ge 2$ and let $A_{F_s}$ be the AG $\K$-algebra  of codimension $n$, socle degree $d$ and Macaulay dual generator $F_s=\ell_1^d+\cdots +\ell_s^d$ where $\ell_1, \ldots ,\ell_s$ are general linear forms. 
Define  $r_s$ as the unique integer such that
    $ {n-2+r_s\choose n-1}<s\le {n-1+r_s\choose n-1}.$ If   $r_s\le  \lfloor\frac{d}{2}\rfloor $ then the MFR of $A_{F_s}$ has the following shape:
    $$ 
    0\longrightarrow  R(-d-n)  \longrightarrow  \begin{array}{c} F_{n-1} \\ \oplus \\ F_1^{\vee}(-d-n) \end{array}\longrightarrow \cdots $$
    $$
\longrightarrow \begin{array}{c} F_2 \\  \oplus \\ F_{n-2}^{\vee}(-d-n)  \end{array}
\longrightarrow \begin{array}{c} F_1 \\ \oplus \\ F_{n-1}^{\vee}(-d-n) \end{array} \longrightarrow R\longrightarrow A_{F_s} \longrightarrow 0
    $$
where $F_i$ is the i-th syzygy module of the ideal $I(Z_{F_s})$ of the tight annihilating scheme $Z_{F_s}$ of $A_{F_s}$ and $F_i^{\vee} = \operatorname{Hom}_R(F_i, R)$ denotes its algebraic dual module. 
    
\end{theorem}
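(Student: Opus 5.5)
The plan is to realize $A_{F_s}$ as a doubling of $R/I(Z_{F_s})$ and then read off the resolution from a mapping cone. Write $J=I(Z_{F_s})$ and $I=\ann_R(F_s)$. I would proceed in four steps.

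\textbf{Step 1: $J\subset I$.} Since $Z_{F_s}$ is reduced and $0$-dimensional in $\PP^{n-1}$, $R/J$ is Cohen--Macaulay of dimension one and satisfies $G_0$ (it is reduced at its minimal primes). For $g\in [J]_k$ one has $g\circ \ell_i^d=\frac{d!}{(d-k)!}\,g(P_i)\,\ell_i^{d-k}=0$ for every $i$. Hence $g\circ F_s=0$, and so $J\subset I$.

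\textbf{Step 2: identify $I/J$ with a twisted canonical module.} I would show $I/J\cong \omega_{R/J}(-d)$. By Proposition~\ref{h_vector} and the Hilbert function of $R/J$ in Remark~\ref{key2}(iii), we get $\HF_{R/J}(t)-\HF_{A_{F_s}}(t)=s-h_{d-t}$ for $t>r_s$ and $0$ for $t\le \lfloor d/2\rfloor$. The Hilbert function of $\omega_{R/J}(-d)$ in degree $t$ is $s-\HF_{R/J}(d-1-t+1)$, suitably indexed, so the two functions agree. To get an actual isomorphism I would use Macaulay duality. The inverse system $J^{-1}$ in degree $k$ is spanned by $\ell_1^k,\dots,\ell_s^k$, and $I^{-1}$ is the submodule generated by $F_s$. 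The quotient $J^{-1}/I^{-1}$ is then dual to $I/J$, and $J^{-1}$ is the graded dual of $H^1_{\mathfrak m}(R/J)$, i.e.\ of the local cohomology whose Matlis dual is $\omega_{R/J}$. The element $F_s$, viewed as a degree $d$ element of $J^{-1}$, gives a map $\omega_{R/J}(-d)\to R/J$. I would show this map is injective by checking it is injective after localizing at each $P_i$, where the image of $F_s$ has nonzero component $\ell_i^d$. I would show its image is $I/J$ using the fact that $I$ is exactly the set of elements killing $F_s$. This yields the exact sequence \eqref{eq:doubling}, and so $A_{F_s}$ is the doubling of $R/J$ via $\psi$, exactly as in the toy example.

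\textbf{Step 3: resolve $\omega_{R/J}(-d)$ and form the mapping cone.} Let $0\to F_{n-1}\to\cdots\to F_1\to R$ be the MFR of $R/J$ (length $n-1$, since the codimension is $n-1$). Dualizing and twisting by $-n$ resolves $\omega_{R/J}$; twisting further by $-d$ gives
\[0\to R(-d-n)\to F_1^{\vee}(-d-n)\to\cdots\to F_{n-1}^{\vee}(-d-n)\to \omega_{R/J}(-d)\to0 .\]
Lift $\psi$ to a chain map into the resolution of $R/J$. The mapping cone then resolves $A_{F_s}$, with $i$-th module $F_i\oplus F_{n-i}^{\vee}(-d-n)$ and last term $R(-d-n)$ (using $F_0=R$). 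This is the stated shape.

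\textbf{Step 4: minimality (main obstacle).} The cone is minimal as soon as no comparison map component $F_{n-i}^{\vee}(-d-n)\to F_{i-1}$ is an isomorphism on a summand, i.e.\ there is no cancellation of equal-degree shifts. The generators of $F_i$ lie in degrees at most $r_s+i$, since $\operatorname{reg}(R/J)=r_s$. The summands of $F_{n-i}^{\vee}(-d-n)$ lie in degrees at least $d+n-(r_s+n-i)=d-r_s+i$. A unit entry would require $d-r_s+i\le r_s+i-1$, i.e.\ $d\le 2r_s-1$, which is excluded by $r_s\le\lfloor d/2\rfloor$. Hence all entries of the comparison maps lie in $\mathfrak m$ and the cone is minimal. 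The delicate points I expect are the precise degree bookkeeping at the end terms ($i=1$ and $i=n$) and rigorously justifying the isomorphism $I/J\cong\omega_{R/J}(-d)$ in Step 2.
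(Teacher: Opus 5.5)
Your overall route is the paper's: realize $A_{F_s}$ as a doubling of $R/J$, take the mapping cone, and get minimality by comparing degrees. Your Steps 3--4 are essentially the paper's argument, and they lead to the same inequality $2r_s<d+1$; your comparison of $F_{n-i}^{\vee}(-d-n)$ with $F_{i-1}$ is in fact the correct indexing. The paper does not prove the doubling at all; it reads it off Remark~\ref{key2}(iv)--(v). So your Step~2 is the only place you go beyond the paper, and as written it does not work.

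\textbf{The gap in Step~2.} The module $J^{-1}\cong\Hom_{\K}(R/J,\K)$ is the Matlis dual of $R/J$ itself. It is Artinian, not finitely generated, and has Hilbert function $\HF_{R/J}$. It is not the dual of $H^1_{\mathfrak m}(R/J)$: that dual is the finitely generated module $\omega_{R/J}$, whose $h$-vector is the reverse of that of $R/J$.

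So the claim ``$F_s\in[J^{-1}]_d$ gives a map $\omega_{R/J}(-d)\to R/J$'' has no construction behind it. More seriously, Step~2 never uses $r_s\le\lfloor d/2\rfloor$, even though the doubling fails without a degree hypothesis. The map induced by $F_s$ naturally lands in $\Gamma_*(\mathcal O_Z)$, not in $R/J$.

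\textbf{A correct version.} Put $L=\Gamma_*(\mathcal O_{Z_{F_s}})=\bigoplus_{t\in\mathbb Z}\K^s$. Embed $R/J\hookrightarrow L$ by $\bar g\mapsto (g(a_1),\dots,g(a_s))$ in degree $\deg g$, where $a_i$ is the coefficient vector of $\ell_i$.
\begin{itemize}
\item Since $H^0_{\mathfrak m}(R/J)=0$, you have $H^1_{\mathfrak m}(R/J)=L/(R/J)$. The pairing $L_t\times L_{-t}\to\K$, $\langle x,y\rangle=\sum_i x_iy_i$, then identifies $\omega_{R/J}$ $R$-linearly with $(R/J)^{\perp}\subset L$.
\item Because $(gh)\circ F_s=d!\sum_i g(a_i)h(a_i)$ whenever $\deg g+\deg h=d$, you get $I/J=(R/J)\cap c^{-1}(R/J)^{\perp}$. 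Here $c=(1,\dots,1)\in L_{-d}$ is a unit of $L$, and $c^{-1}(R/J)^{\perp}\cong\omega_{R/J}(-d)$.
\item $c^{-1}(R/J)^{\perp}\subseteq R/J$, because $(\omega_{R/J})_u=0$ for $u\le -r_s$, while $(R/J)_t=L_t$ for $t\ge r_s$. Every $u$ falls into one of these two cases as soon as $d\ge 2r_s-1$.
\end{itemize}
Hence $I/J=c^{-1}(R/J)^{\perp}\cong\omega_{R/J}(-d)$. Injectivity is automatic because $c$ is a unit in $L$, so no localization argument is needed.

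This also shows that the hypothesis enters twice: $d\ge 2r_s-1$ for the doubling, and $d\ge 2r_s$ for the minimality of the cone.
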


\begin{proof}
We consider the minimal free $R$-resolution of $R/I(Z_{F_s})$   
\begin{eqnarray}\label{resR/IZ}
    0 \longrightarrow F_{n -1} \longrightarrow \cdots \longrightarrow F_{2} \longrightarrow F_{1} \longrightarrow R \longrightarrow R/I(Z_{F_s}) \longrightarrow 0. 
\end{eqnarray}
as well as the  minimal free $R$-resolution  of its canonical module $\text{Ext}_R^{n-1}(R/I(Z_{F_s}), R(-n-d)) \cong \omega_{R/I(Z_{F_s})} (-d)$:
\begin{eqnarray}\label{res2}
    0 \longrightarrow R(-d-n) \longrightarrow F_1^{\vee}(-d-n)
    \longrightarrow \cdots \end{eqnarray}
    $$\longrightarrow F_{n -1}^{\vee}(-d-n) \longrightarrow \omega_{R/I(Z_{F_s})} (-d)\longrightarrow 0.
$$

By hypothesis  $r_s \le \lfloor d/2 \rfloor$. Therefore, using Definition \ref{doubling} and Remark \ref{key2}, we get that $Z_{F_s}$ is the tight annihilating scheme of $A_{F_s}$ and the AG $\K$-algebra $A_{F_s}$ is a doubling of $R/I(Z_{F_s})$. Thus, there exists a short exact sequence of $R$-modules
\begin{eqnarray*}
    0 \longrightarrow \omega_{R/I(Z_{F_s})}(-d) \xrightarrow{\psi} R/I(Z_{F_s}) \longrightarrow A_{F_s} \longrightarrow 0.
\end{eqnarray*}

Finally,  the mapping cone of the morphism $\psi$ yields a free $R$-resolution for $A_{F_s}$. The modules of this resolution are given by the direct sum of the resolution (\ref{resR/IZ}) and its dual  (\ref{res2}):

\begin{eqnarray}\label{mfr}
        0\longrightarrow  R(-d-n)  \longrightarrow  \begin{array}{c} F_{n-1} \\ \oplus \\ F_1^{\vee}(-d-n) \end{array}\longrightarrow \cdots
        \end{eqnarray}
        $$
        \longrightarrow \begin{array}{c} F_2 \\  \oplus \\ F_{n-2}^{\vee}(-d-n)  \end{array}
\longrightarrow \begin{array}{c} F_1 \\ \oplus \\ F_{n-1}^{\vee}(-d-n) \end{array} \longrightarrow R\longrightarrow A_{F_s} \longrightarrow 0
$$

  For the minimality of this last resolution, we analyze the degrees of the generators. For a set of $s$ points in general position, the generators of $F_i$ are concentrated in degrees $i+r_s-1$ and $i+r_s$. In particular, the maximum degree in $F_i$ is $i+r_s$, while the minimum degree in $F_{n-i}^{\vee}(-d-n)$ is $d+n-r_s-(n-i)=d-r_s+i$. Since by hypothesis $r_s \le \lfloor d/2 \rfloor$, it follows that $i+r_s < d-r_s+i+1$ for all $i$ and  so we conclude that resolution  (\ref{mfr}) is minimal.
\end{proof}

\section{Jordan type}\label{section5}
The goal of this Section is to compute the generic linear Jordan type of $A_{F_s}$.

\begin{theorem}\label{mainthm12} We fix the integers $d \geq 1$, $ 1 \le s \le \binom{n+d-1}{d}$ and $n\ge 2$. We consider the AG $\K$-algebra $A_{F_s}$ of codimension $n$, socle degree $d$ and Macaulay dual generator $F_s=\ell_1^d+\cdots +\ell_s^d$ where $\ell_1, \ldots ,\ell_s$ are general linear forms.   The generic linear Jordan type of $A_{F_s}$ is:
$$ P_{\ell,A_{F_s}}= ((d + 1), (d-1)^{\alpha_1},(d - 3)^{\alpha_2},\cdots,  (d+1-2i)^{\alpha _i}, \cdots , (d+1-2r_s)^{\alpha_{r_s}})$$
where $\alpha _i=h_i-h_{i-1}$, $0\le i \le r_s$ and $(h_0, \dots, h_d)$ is the $h$-vector of $A_{F_s}$.
\end{theorem}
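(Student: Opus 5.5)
The plan is to show that a general linear form $\ell$ is a strong Lefschetz element of $A_{F_s}$. The Jordan type then follows from the standard fact that for an SLP element the Jordan type equals the conjugate partition of the $h$-vector. More precisely, if $\times\ell^k:[A]_i\to[A]_{i+k}$ has maximal rank for all $i,k$, then for a symmetric unimodal $h$-vector the strings of the Jordan basis can be chosen to start in degree $i$ and end in degree $d-i$. There are exactly $h_i-h_{i-1}$ of them for each $i\le \lfloor d/2\rfloor$, and each has length $d+1-2i$. By Proposition \ref{h_vector}, $h_i-h_{i-1}=0$ for $r_s<i\le\lfloor d/2\rfloor$, since $h$ is constant equal to $s$ in that range. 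This yields exactly the partition $((d+1),(d-1)^{\alpha_1},\dots,(d+1-2r_s)^{\alpha_{r_s}})$, with $\alpha_0=1$ accounting for the string $1,\ell,\dots,\ell^d$. When $r_s>\lfloor d/2\rfloor$ the algebra is compressed and the same bookkeeping applies with the $\alpha_i$ truncated at $\lfloor d/2\rfloor$. So the heart of the proof is the SLP.

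For the SLP, by the duality of Remark \ref{rmk}(2) it suffices to show that $\times\ell^{d-2i}:[A_{F_s}]_i\to[A_{F_s}]_{d-i}$ is an isomorphism for every $i\le\lfloor d/2\rfloor$. I would compute this map through the apolar description. Identify $[A]_i$ with $R_i/[\ann F_s]_i$. The pairing $[A]_i\times[A]_i\to\K$, $(\alpha,\beta)\mapsto (\alpha\beta\ell^{d-2i})\circ F_s$, is nondegenerate exactly when the map is an isomorphism. Since $\alpha\circ \ell_j^d$ is a multiple of $\alpha(P_j)\ell_j^{d-i}$, where $\alpha(P_j)$ is $\alpha$ evaluated at the dual point, we get
\[
(\alpha\beta\ell^{d-2i})\circ F_s=d!\sum_{j=1}^s \ell(P_j)^{d-2i}\,\alpha(P_j)\beta(P_j).
\]
Thus the Gram matrix is $E^{T}DE$, where $E$ is the evaluation matrix of $R_i$ on $Z_{F_s}$ and $D=\mathrm{diag}(\ell(P_j)^{d-2i})$.

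For general $\ell$ all $\ell(P_j)\neq 0$, so $D$ is invertible. It remains to see that $E^TDE$ has rank equal to $\operatorname{rank}E=h_i=\min\{\binom{n+i-1}{i},s\}$; this equality is the content of Proposition \ref{h_vector}, since the kernel of $E$ is $[I(Z_{F_s})]_i=[\ann F_s]_i$ for $i\le\lfloor d/2\rfloor$. This rank equality is the main obstacle, because $E^TDE$ is a symmetric form and could a priori degenerate on the image of $E$ even with $D$ invertible. My first attempt is a specialization argument: choose $\ell$ with $\ell(P_j)$ all equal, or deform to $D$ close to a positive diagonal over $\mathbb{R}$. After a linear change of coordinates the points may be taken real and general, and $\ell$ real with $\ell(P_j)\ne 0$. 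With $d-2i$ even, $D$ is then positive definite, so $E^TDE$ has rank equal to $\operatorname{rank}E$. For $d-2i$ odd, choose real $\ell$ with all $\ell(P_j)>0$, which is possible since finitely many real points can be placed in an open half-space. Nondegeneracy is a Zariski-open condition on $(\ell,\ell_1,\dots,\ell_s)$, so achieving it at one real point suffices for general choices over $\K$. Intersecting the finitely many open conditions over $i$ then gives a single general $\ell$ that is a strong Lefschetz element, which completes the argument.
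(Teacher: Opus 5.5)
Your proof is correct, but it runs in the opposite direction from the paper's.

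\textbf{How the paper argues.} The paper computes the Jordan type first, by induction on $d$. It passes to $B=A_{\ell'\circ F_s}$, which has socle degree $d-1$ and is again a general power sum. It uses the WLP of both $A_{F_s}$ and $B$ (Theorem \ref{WLP_main}) to match the number of parts with the Sperner numbers. From $\dim_{\K}A_{F_s}=\dim_{\K}B+S_B$ (or $+\,h_{d/2}-h_{d/2-1}$ when $r_s=d/2$) it concludes that each string of $B$ gains exactly one bead. The SLP (Theorem \ref{SLP_main}) is then deduced from the Jordan type via Proposition \ref{jordanTypeWLP}.

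\textbf{How you argue.} You prove the SLP directly from the identity
$(\alpha\beta\ell^{d-2i})\circ F_s=d!\sum_j\ell(a_j)^{d-2i}\alpha(a_j)\beta(a_j)$
together with a Hodge--Riemann type positivity specialization. You then read off the Jordan type as the conjugate partition of the $h$-vector.

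\textbf{What your route buys.}
\begin{itemize}
\item It needs neither the WLP nor the induction.
\item It supplies what the paper's counting leaves implicit. Equal total dimension and equal number of parts do not by themselves force every part to grow by exactly one; for instance $(3,1)$ versus $(3,3)$. That step really needs $\ell'=\ell$, the isomorphism $\ell A_{F_s}\cong B(-1)$ from Proposition \ref{exactsequence}, and a genericity argument, because the forms $L_j$ depend on $\ell$.
\item It covers the compressed range $r_s>\lfloor d/2\rfloor$. The paper's two cases omit this range, and there, as you note, the displayed formula must be read with the $\alpha_i$ truncated at $\lfloor d/2\rfloor$.
\item As a by-product you get $[\ann F_s]_i=[I(Z_{F_s})]_i$ for $i\le\lfloor d/2\rfloor$.
\end{itemize}
The paper's approach, in turn, stays inside the contraction/induction framework it already uses for the WLP.

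\textbf{Points to tidy.}
\begin{itemize}
\item A linear change of coordinates cannot make arbitrary points real, and $\K$ need not contain $\mathbb{R}$. What you actually use is that the $h_i\times h_i$ minors of $E^TDE$ are integer polynomials in the coordinates of $(\ell,a_1,\dots,a_s)$. Nonvanishing at one rational (or real) point therefore shows they are nonzero polynomials, and hence nonvanishing on a dense open subset over $\K$.
\item $F_s$ depends on the vectors $a_j$, not only on the points $P_j$. So choose the vectors at the specialization point, for example $\ell=x_1$ and $a_{j,1}=1$ with the other coordinates rational and general. This gives $D=I$ for every parity of $d-2i$ at once.
\item State the open condition as $\operatorname{rank}(E^TDE)\ge h_i$. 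Combine it with $\operatorname{rank}(E^TDE)\le\dim_{\K}[A_{F_s}]_i\le h_i$, which holds because the radical of the pairing contains $[\ann F_s]_i$. This yields the isomorphism $\times\ell^{d-2i}:[A_{F_s}]_i\to[A_{F_s}]_{d-i}$ at every general point.
\end{itemize}
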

\begin{proof}
        
Let $\ell $ be a generic linear form as well as a weak Lefschetz element for $A_{F_s}$ which exists because, by Theorem  \ref{WLP_main}, $A_{F_s}$ satisfies the WLP. 
We will compute $P_{\ell,A_{F_s}}$ using induction on $d$. If $d=1$ or $2$ the \ result is obvious. Indeed, if $d=1$ then, after a change of coordinates, we have $F_s=x_1+\cdots +x_s$ with  $s\le n$, $ {F_s}$ has  $h$-vector (1, 1) and $P_{\ell, A_{F_s}}=(2)$. If $d=2$ then $F_s=\ell_1^2+\cdots +\ell_s^2$ with  $1\le s\le {n+1\choose 2}$,
$A_{F_s}$ has $h$-vector:  $ \ (1, \ \min(s,n), \ 1)$, and
$P_{\ell, A_{F_s}}=(3, 1^{\alpha_1})$ with $\alpha _1=\min(s,n)-1$. 

Assume $d>2$. We distinguish 2 cases:

\vskip 2mm
\noindent {\bf Case 1: $r_s<\frac{d+1}{2}$.} We consider the AG $\K$-algebra $B=A_{\ell' \circ F_s}$ where $\ell'=x_1+\cdots +x_n$. 
By construction $B$ is an AG $\K$-algebra of codimension $n$, socle degree $d-1$ and Macaulay dual generator $\ell '\circ F_s=L_1^{d-1}+\cdots +L_s^{d-1}$  where $L_1, \ldots ,L_s$ are general linear forms in $\K[X_1,\ldots,X_n]$. We also observe that $\HF_B(t)=\HF_{A_{F_s}}(t)$ for all $t\le r_s$. By hypothesis of induction
$$ P_{\ell,B}= ((d), (d-2)^{\alpha_1},(d - 4)^{\alpha_2},\cdots,  (d-2i)^{\alpha _i}, \cdots , (d-2r_s)^{\alpha_{r_s}}).$$

Since $A_{F_s}$ and $B$ satisfy the WLP and $S_B=S_{A_{F_s}}$, we have that the numbers of parts in $P_{\ell, B}$
coincide with the numbers of parts in $P_{\ell, A_{F_s}}$. Moreover,
$$
\dim _{\K}A_{F_s}=\dim_{\K}B+S_B$$
and the number of beads in each part of $P_{\ell, A_{F_s}}$ is the result of adding one more to the number of beads in each part of $P_{\ell, B}$. Therefore, we conclude that
$$ P_{\ell,A_{F_s}}= ((d+1), (d-1)^{\alpha_1},(d - 3)^{\alpha_2},\cdots,  (d+1-2i)^{\alpha _i}, \cdots , (d+1-2r_s)^{\alpha_{r_s}})$$
as we claim.
\vskip 2mm
\noindent {\bf Case 2: $r_s=\frac{d}{2}$.} (So, necessarily $d$ is even.)  In this case, we also consider the AG $\K$-algebra $B=A_{\ell' \circ F_s}$ where $\ell'=x_1+\cdots +x_n$. 
By construction $B$ is an AG $\K$-algebra of codimension $n$, socle degree $d-1$ and Macaulay dual generator $\ell '\circ F_s=L_1^{d-1}+\cdots +L_s^{d-1}$  where $L_1, \ldots ,L_s$ are general linear forms in $\K[X_1,\ldots,X_n]$. In this case $B$ is a compressed AG $\K$-algebra with socle degree $d-1$ and generic linear Jordan type
$$ P_{\ell,B}= ((d), (d-2)^{\alpha_1},(d - 4)^{\alpha_2},\cdots,  (d-2i)^{\alpha _i}, \cdots , (d-2(r_s-1))^{\alpha_{r_s-1}}).$$

Since $A_{F_s}$ and $B$ satisfy the WLP and $S_{A_{F_s}}= S_B+ h_{d/2}-h_{d/2 -1}$, we have that the numbers of parts in $P_{\ell, A_{F_s}}$
coincide with that the numbers of parts in $P_{\ell, B}$ plus $h_{d/2}-h_{d/2 -1}$. In addition,
$$
\dim _{\K}A_{F_s}=\dim_{\K}B+S_B+  h_{d/2}-h_{d/2 -1}.$$
Therefore, the only possibility is to add a bead in each part of $ P_{\ell,B}$ and $h_{d/2}-h_{d/2 -1}$ parts with only one bead (notice that $d+1-2r_s=1)$, i.e.,
$$ P_{\ell,A_{F_s}}= ((d+1), (d-1)^{\alpha_1},(d - 3)^{\alpha_2},\cdots,  (d+1-2i)^{\alpha _i}, \cdots , (d+1-2(r_s-1))^{\alpha_{r_s-1}},(d+1-2r_s)^{\alpha_{r_s}})$$
which proves what we want.
\end{proof}

To determine whether an Artinian standard graded $\K$-algebra $A$ has the WLP seems a simple problem of linear algebra, but it has proven to be extremely elusive and much more work on this topic remains to be done, see \cite{JMR} and its reference list for more information. 
It is an interesting fact that the Jordan type of a linear form $\ell$ in an Artinian $\K$-algebra $A$ determines whether or not $\ell$ is a weak/strong Lefschetz element of $A$, as the following result shows.

\begin{definition}
    Let $A$ be a standard graded Artinian $\K$-algebra with Hilbert function $\HF_A$. We consider the partition whose parts are the non-zero values of $\HF_A$, after reordering them to become non-increasing. We call the conjugate of this partition the \emph{conjugate partition of $\HF_A$}.
\end{definition}

\begin{proposition}\label{jordanTypeWLP}
    Let $A$ be a standard graded Artinian $\K$-algebra. Then a linear form $\ell \in [A]_1$ is a weak Lefschetz element if and only if the number of parts in the Jordan type $P_{\ell,A}$ equals the Sperner number $S_A$.
   $A$  satisfies the SLP if and only if the conjugate partition
of $\HF_A$ equals the Jordan type $P_{\ell,A}$ for some
linear form $\ell \in [A]_1$.
\end{proposition}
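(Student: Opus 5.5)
The plan is to translate both statements into facts about the ranks of the multiplication maps $\times \ell^k$, using the standard dictionary between Jordan type and ranks of powers of a nilpotent operator. For a nilpotent endomorphism $\phi=\times\ell$ of $A$ with Jordan type $P_{\ell,A}=(p_1,\dots,p_t)$, we have $\dim\ker\phi = t$, the number of parts. More generally, $\operatorname{rank}\phi^k=\sum_j \max(p_j-k,0)$, so the rank sequence determines the partition; its conjugate has parts $\operatorname{rank}\phi^{k-1}-\operatorname{rank}\phi^{k}$. Because $\ell$ is homogeneous of degree one, $\phi^k$ splits as the direct sum of the graded pieces $\times\ell^k:[A]_i\to[A]_{i+k}$. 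Hence $\operatorname{rank}\phi^k=\sum_i \operatorname{rank}(\times\ell^k)_i$, and each summand is at most $\min(h_i,h_{i+k})$.

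For the weak Lefschetz statement, I would write $\dim\ker\phi=\sum_i \dim\ker(\times\ell:[A]_i\to[A]_{i+1})$, where the degree-$c$ piece contributes $h_c$. Each summand is at least $\max(h_i-h_{i+1},0)$, with equality exactly when the map in degree $i$ has maximal rank. Summing, $\sum_i\max(h_i-h_{i+1},0)$ equals $S_A$, provided the $h$-vector is unimodal: the total drop from the peak down to $h_{c+1}=0$ is then exactly the peak value. So the number of parts is at least $S_A$, with equality if and only if every graded piece has maximal rank, that is, $\ell$ is a weak Lefschetz element. The unimodality assumption needs care. Without it, the inequality $t\ge \sum_i\max(h_i-h_{i+1},0)\ge S_A$ still holds, and equality in the second step forces unimodality. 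If $\ell$ is weak Lefschetz, then injectivity followed by surjectivity forces unimodality, so equality holds in both steps. Conversely, if $t=S_A$, both inequalities are equalities, which gives maximal rank in every degree.

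For the strong Lefschetz statement I would use the same comparison for every power. The bound is $\operatorname{rank}\phi^k\le\sum_i\min(h_i,h_{i+k})$, with equality for all $k$ if and only if $\ell$ is a strong Lefschetz element. The key combinatorial step is to show that the partition whose rank sequence is $r_k=\sum_i\min(h_i,h_{i+k})$ is exactly the conjugate partition of $\HF_A$. In the unimodal case, stack the $h_i$ as columns, and rows of the diagram correspond to maximal runs of degrees with $h_i\ge j$. Then $\sum_i\min(h_i,h_{i+k})=\sum_j \#\{i: h_i\ge j,\ h_{i+k}\ge j\}$, which is $\sum_j\max(\lambda_j-k,0)$, where $\lambda_j=\#\{i:h_i\ge j\}$ are the parts of the conjugate partition.

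The main obstacle is the comparison in general, with a possibly non-unimodal $h$-vector, in both directions. I would argue via dominance order. The rank bounds say $P_{\ell,A}$ is dominated by (is no larger than) the partition with ranks $\sum_i\min(h_i,h_{i+k})$, which in turn is dominated by the conjugate partition $\HF_A^{\vee}$. So $P_{\ell,A}=\HF_A^{\vee}$ forces equality throughout. That gives maximal rank for all $\times\ell^k$ in all degrees, hence SLP. Conversely, SLP gives equality in the first comparison, and SLP forces unimodality, which gives equality in the second. If I cannot establish this cleanly, I would instead cite the corresponding statement in \cite{IMM22}, where this proposition originates.
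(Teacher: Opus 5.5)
Your proof of the weak Lefschetz statement is complete. Your treatment of the strong Lefschetz statement is correct whenever $\HF_A$ is unimodal. The step that fails is the dominance-order chain you propose for non-unimodal $\HF_A$. There the sequence $r_k=\sum_i\min(h_i,h_{i+k})$ need not be the rank sequence of any partition. It also need not be bounded by $\rho_k=\sum_j\max(\lambda_j-k,0)$, where $\lambda=\HF_A^{\vee}$.

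Stanley's Gorenstein $h$-vector $(1,13,12,13,1)$ is a counterexample. It gives $r_k=40,26,15,2,1,0$, whose successive differences $14,11,13,1,1$ are not non-increasing. Meanwhile $\lambda=(5,3^{11},2)$ gives $\rho_2=14<15=r_2$. The cause is the non-interval row $\{i:h_i\ge 13\}=\{1,3\}$, which contributes a pair at distance $2$ although it has only two elements. So neither link of your chain holds in general, and the direction ``$P_{\ell,A}=\HF_A^{\vee}\Rightarrow$ SLP'' is not yet proved without unimodality.

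The repair is to use your first statement instead of dominance. The conjugate partition $\HF_A^{\vee}$ has exactly $\max_i h_i=S_A$ parts. So $P_{\ell,A}=\HF_A^{\vee}$ makes $\ell$ a weak Lefschetz element, and hence $\HF_A$ is unimodal. For this last implication, state explicitly that surjectivity of $\times\ell:[A]_i\to[A]_{i+1}$ propagates to all higher degrees, since $[A]_{i+2}=[A]_1[A]_{i+1}$ in a standard graded algebra.

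Once $\HF_A$ is unimodal, every row is an interval, so $r_k=\rho_k$ for all $k$. Then $\operatorname{rank}(\times\ell)^k=\rho_k=r_k$ forces every graded piece of every $\times\ell^k$ to have maximal rank. The converse goes exactly as you wrote.

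With this change your rank-of-powers argument is a self-contained proof. The paper does not prove the statement and only refers to \cite[Remark 3 and Proposition 14]{HW} and \cite[Proposition 3.64]{HMMNWW}.
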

\begin{proof}
The reader can look at  \cite[Remark 3 and Proposition 14]{HW} or \cite[Proposition 3.64]{HMMNWW}.   
\end{proof}

Therefore, as an immediate application of Theorem \ref{mainthm12} we get the following result

\begin{theorem} \label{SLP_main}
We consider the AG $\K$-algebra $A_{F_{s}} = R/\operatorname{Ann}(F_{s}),$ of codimension $n$, socle degree $d$ and Macaulay dual generator 
\begin{eqnarray*}
F_{s}=\ell_1^d+\cdots+\ell_s^d
\end{eqnarray*}
where $\ell_1,\dots,\ell_s$ are general linear forms in $[S]_1$, and $1 \le s \le \binom{d+n-1}{d}$.                  
Then, $A_{F_s}$ has the SLP. 
\end{theorem}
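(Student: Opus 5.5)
The plan is to deduce the SLP from Theorem \ref{mainthm12} together with the second half of Proposition \ref{jordanTypeWLP}. By that proposition it suffices to show that, for a general linear form $\ell$, the Jordan type $P_{\ell,A_{F_s}}$ coincides with the conjugate partition of $\HF_{A_{F_s}}$. So the whole argument reduces to a combinatorial comparison of two partitions of $\dim_{\K}A_{F_s}$.

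First I will describe the $h$-vector using Proposition \ref{h_vector}. It is symmetric, $h_k=h_{d-k}$, and unimodal. For $0\le k\le r_s$ it is strictly increasing, with $h_k=\binom{n+k-1}{k}$ for $k<r_s$ and $h_{r_s}=\min\{\binom{n+r_s-1}{r_s},s\}$ (in the compressed case one reads $r_s$ as $\lfloor d/2\rfloor$). It is then constant, equal to $S_{A_{F_s}}$, for $r_s\le k\le d-r_s$.

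Next I will compute the conjugate partition of a symmetric unimodal sequence. Its $j$-th part equals $\#\{k : h_k\ge j\}$. For $h_{i-1}<j\le h_i$ with $0\le i\le r_s$ (setting $h_{-1}=0$), the set $\{k: h_k\ge j\}$ is the interval $[i,d-i]$, which has length $d+1-2i$. Each such length therefore occurs exactly $h_i-h_{i-1}=\alpha_i$ times, with $\alpha_0=1$. Hence the conjugate partition is $((d+1),(d-1)^{\alpha_1},\dots,(d+1-2r_s)^{\alpha_{r_s}})$, which is precisely the Jordan type given by Theorem \ref{mainthm12}. The SLP then follows from Proposition \ref{jordanTypeWLP}, since a general $\ell$ realizes the generic Jordan type.

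The only delicate points are bookkeeping. I must check that the range $0\le i\le r_s$ in Theorem \ref{mainthm12} matches the strictly increasing part of the $h$-vector in all cases, including the compressed case $r_s>\lfloor d/2\rfloor$ and the case $d$ even with $r_s=d/2$. In the latter case the last length is $d+1-2r_s=1$, consistent with the single middle degree $k=d/2$. I will also confirm that the parts sum to $\sum_k h_k$. This identity follows from $\sum_i \alpha_i(d+1-2i)=\sum_k h_k$, which holds by symmetry and gives a quick consistency check.
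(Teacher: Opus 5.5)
Your proposal is correct and follows the paper's proof: it combines the generic Jordan type from Theorem \ref{mainthm12} with the $h$-vector from Proposition \ref{h_vector} and concludes via the SLP criterion of Proposition \ref{jordanTypeWLP}. Your explicit computation of the conjugate partition through the intervals $[i,d-i]$, together with reading $r_s$ as $\lfloor d/2\rfloor$ in the compressed case, supplies the verification that the paper states without detail.
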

\begin{proof} By Theorem \ref{mainthm12}  the generic linear Jordan type of $A_{F_s}$ is:
$$ P_{\ell,A_{F_s}}= ((d + 1), (d-1)^{\alpha_1},(d - 3)^{\alpha_2},\cdots,  (d+1-2i)^{\alpha _i}, \cdots , (d+1-2r_s)^{\alpha_{r_s}})$$
where $\alpha_i = h_i - h_{i-1}$ for $1 \le i \le r_s$, and $(h_0, \dots, h_d)$ denotes the $h$-vector of $A_{F_s}$.
Since the Hilbert function of $A_{F_s}$ is given by (see Proposition \ref{computingHilbertFunctionGorenstein})
$$
h_k=\dim_{\mathbb K}[A_{F_{s}}]_k
=\begin{cases}\min\left\{\binom{n+k-1}{k},\, s\right\} \text{ for }  0\leq k\leq\lfloor\frac{d}{2}\rfloor \\
\text{symmetry}  \text{ for } \lfloor\frac{d}{2}\rfloor < k \leq d  ,
\end{cases}
$$
we get that the conjugate partition
of $\HF_{A_{F_s}}$ equals the Jordan type $P_{\ell,A_{F_s}}$.
 Therefore,  applying Proposition \ref{jordanTypeWLP} we conclude that
$A_{F_s}$ has the SLP.
\end{proof}

\begin{remark} \label{WLPnecessity} 
We note that while the SLP (Theorem \ref{SLP_main}) implies the WLP (Theorem \ref{WLP_main}), establishing the WLP first was essential to our approach.
Indeed, the existence of a weak Lefschetz element $\ell$ is crucial to ensure that the number of parts in the partition of the generic linear Jordan type $P_{\ell, A_{F_s}}$ from \ref{mainthm12} matches or behaves controllably with respect to the algebra $B$, allowing the full determination of the Jordan blocks.
\end{remark}

\begin{example} We consider the AG $\K$-algebra $A_{F_8}$ of codimension $4$, socle degree $7$ and Macaulay dual generator $F_8=X^7+Y^7+Z^7+T^7+\ell_1^7+\ell_2^7+\ell_3^7+\ell_4^7$ where $\ell_1,\ldots ,\ell_4$ are general linear forms. According to Proposition \ref{h_vector}
 it has $h$-vector (1,4,8,8,8,8,4,1).  Using Macaulay2  \cite{M2}, we compute the Jordan type of $x+y+z+t$ on $A_{F_s}$ and we get: 
$$
P_{x+y+z+t,A_{F_s}} = (8,6,6,6,4,4,4,4),
$$
since there are strings of  lengths $8$, $6$, $6$, $6$, $4$, $4$, $4$ and $4$, respectively. In addition, the strings start at degrees $0,1,1,1,2,2,2,2$, so the Jordan degree-type is
$$ 
\mathcal{S}_{x+y+z+t,A_{F_s}} = ((8,0),(6,1),(6,1),(6,1),(4,2),(4,2),(4,2),(4,2)) = (8_0, 6_1^3, 4_2^4) . 
$$

\end{example}

\section*{Declaration}

\subsection*{Funding} The first author was supported by CAPES/PDSE grand number 88881.220483/2025-01. The second author was partially supported
by the grant PID2024-157142NB-I00.
\subsection*{Conflict of interest} On behalf of all authors, the corresponding author states that there is no conflict of
interest.


\begin{thebibliography}{99}

\bibitem{BG}  E. Ballico, A.V. Geramita, {\em The minimal free resolution of s general points in $\PP^3$}, Can. Math. Soc.
Conf. Proc. {\bf 6} (1986), l--10.

\bibitem{B} M. Boij, {\em Betti numbers of compressed level algebras },     Journal of Pure and Applied Algebra {\bf 134} (1999), 111--131.               

\bibitem{BK}H. Brenner and A. Kaid, {\em Syzygy bundles on $\mathbb{P}^2$ and the weak Lefschetz property}, Illinois J. Math. {\bf 51} (2007), no.~4, 1299--1308.

\bibitem{BH93}
W.~Bruns and J.~Herzog, \emph{Cohen-{M}acaulay rings}, Cambridge Studies in Advanced Mathematics, vol.~39, Cambridge University Press (1993), Cambridge.


\bibitem{EPSW} D. Eisenbud, S. Popescu, F. Schreyer and, C. Walter, {\em Exterior algebra methods for the minimal resolution conjecture}, Duke Math. J. {\bf 112} (2002), no.~2, 379--395.

\bibitem{G1} F. Gaeta, {\em Sur la distribution des degr\'es des formes appartenant \`a{} la matrice de l'id\'eal homog\`ene attach\'e{} \`a{} un groupe de $N$ points g\'en\'eriques du plan}, C. R. Acad. Sci. Paris {\bf 233} (1951), 912--913.


\bibitem{GM} A.V. Geramita, P. Maroscia, {\em The ideal of forms vanishing at a finite set of points $\PP^n$}. J. Algebra
{\bf 90} (1984), 528-555.

\bibitem{M2}
D.~R.~Grayson and M.~E.~Stillman,
\emph{Macaulay2, a software system for research in algebraic geometry}.
Available at \url{https://macaulay2.com/}.

\bibitem{HMMNWW} T. Harima, T. Maeno, H. Morita, Y. Numata, A. Wachi, and J. Watanabe.
{\em The Lefschetz Properties}. Vol. 2080. Lecture Notes in Mathematic. Springer,
Heidelberg, 2013, pp. xx+250.
\bibitem{HW} T. Harima and J. Watanabe. {\em The finite free extension of Artinian K-algebras
with the strong Lefschetz property}.  Rendiconti del Seminario Matematico
della Università di Padova {\bf 110} (2003),  119--146.

\bibitem{HS} A. Hirschowitz and C.~T. Simpson, {\em La r\'esolution minimale de l'id\'eal d'un arrangement g\'en\'eral d'un grand nombre de points dans $\PP^n$}, Invent. Math. {\bf 126} (1996), no.~3, 467--503.

\bibitem{IMM22} A. Iarrobino, P. Macias Marques, and C. McDaniel. {\em Artinian algebras and
Jordan type}. In: Journal of Commutative Algebra 14.3 (2022), pp. 365--414.

\bibitem{IK}A. Iarrobino and V. Kanev, {\it Power sums, Gorenstein algebras, and determinantal loci}, Lecture Notes in Mathematics, 1721, Springer, Berlin, 1999; MR1735271


\bibitem{JMR}M. Juhnke-Kubitzke and R. M. Miró-Roig. {\em List of problems. } In: Nagel, U., Adiprasito,
K., Di Gennaro, R., Faridi, S., Murai, S. (eds) Lefschetz Properties. SLP-WLP 2022. Springer INdAM
Series, vol 59. Springer, Singapore. https://doi.org/10.1007/978-981-97-3886-1-12.

\bibitem{KMMNP} 
J. Kleppe, J.  Migliore, R.~M.  Mir\' o-Roig, U. Nagel and  C. Peterson, {\em Gorenstein liaison, complete intersection
liaison invariants and unobstructedness}, Mem.\ Amer.\ Math.\
  Soc.\ {\bf 154} (2001), no.\ 732 viii+116 pp.



 \bibitem{L} A. M. Lorenzini, {\em The minimal resolution conjecture}, J. Algebra {\bf 156} (1993), no. 1, 5--35.

  
\bibitem{MW09}
T. Maeno and J. Watanabe, {\em Lefschetz elements of Artinian Gorenstein algebras and Hessians of homogeneous polynomials}, Illinois J. Math. {\bf 53} (2009), no.~2, 591--603. 



\bibitem{MMRMO}
E. Mezzetti, R.~M. Mir\'o-Roig and G.~M. Ottaviani, {\em Laplace equations and the weak Lefschetz property}, Canad. J. Math. {\bf 65} (2013), no.~3, 634--654. 

\bibitem{MMN}  J.  Migliore, R.~M.  Mir\' o-Roig and U. Nagel, {\em Minimal resolution of relatively compressed level algebras}, Journal of Algebra
{\bf 284} (2005),  333-370.

\bibitem{MMN2} J.~C. Migliore, R.~M. Mir\'o-Roig and U. Nagel, {\em Monomial ideals, almost complete intersections and the weak Lefschetz property}, Trans. Amer. Math. Soc. {\bf 363} (2011), no.~1, 229--257; MR2719680




\bibitem{S80}R.~P. Stanley, {\em Weyl groups, the hard Lefschetz theorem, and the Sperner property}, SIAM J. Algebraic Discrete Methods {\bf 1} (1980), no.~2, 168--184.

\end{thebibliography}
\end{document}